\tikzset{vertex/.style={circle,draw,fill,inner sep=0pt,minimum size=1mm}}
\theoremstyle{plain}
\newtheorem{thm}{Theorem}
\newtheorem{lem}[thm]{Lemma}
\newtheorem{prop}[thm]{Proposition}
\newtheorem{cor}[thm]{Corollary}
\theoremstyle{definition}
\newtheorem{definition}[thm]{Definition}
\newtheorem{exl}[thm]{Example}
\numberwithin{thm}{section}
\newcommand{\adj}{\leftrightarrow}
\newcommand{\adjeq}{\leftrightarroweq}
\def\Z{{\mathbb Z}}
\def\N{{\mathbb N}}
\begin{document}
\title{Multivalued Functions in Digital Topology}
\author{Laurence Boxer
         \thanks{
    Department of Computer and Information Sciences,
    Niagara University,
    Niagara University, NY 14109, USA;
    and Department of Computer Science and Engineering,
    State University of New York at Buffalo.
    E-mail: boxer@niagara.edu
    }
}
\date{ }
\maketitle

\begin{abstract}
We study several types of multivalued functions in digital topology.

Key words and phrases: digital topology, digital image, multivalued function
\end{abstract}

\section{Introduction}
A common method of composing a movie or a
video is via a set of ``frames" or images
that are projected sequentially. If a frame
is an $m \times n$ grid of pixels, then a
transition between projections of consecutive
frames requires reading and then outputting to
the screen each of the $mn$ pixels of the
incoming frame. Since many frames must be
displayed each second (in current technology,
30 per second is common), a video composed
in this fashion uses a lot of memory, and
requires that a lot of data be processed rapidly.

Often, consecutive frames have a great deal of resemblance. In particular, there are
many pairs $(i,j)$ such that the pixel in row~$i$ and column~$j$ is unchanged
between one frame and its successor. If, further, one can efficiently compute for all
changing pixels how they change between successive
frames, then it is not necessary to use as
much storage for the video, as the pixels not
changed between successive frames need not be subjected to the i/o described above; and a larger 
number of frames or their equivalent can be processed per second.
E.g., this approach can be taken in computer
graphics when the changes in the viewer's screen is due to the movements or transformations of sprites.
Thus, the world of applications motivates us to understand the properties of structured single-valued and multivalued
functions between digital images.

Continuous (single-valued and multivalued) functions can often handle changes between
successive frames that seem modeled on continuous Euclidean changes. More general changes may be discontinuous, such as the sudden 
breaking of an object.

In this paper, we develop tools for modeling changes in digital images.
We are particularly concerned with properties of multivalued functions between digital 
images that are characterized by any of the following.
\begin{itemize}
\item Continuity~\cite{egs08,egs12}
\item Weak continuity~\cite{Tsaur}
\item Strong continuity~\cite{Tsaur}
\item Connectivity preservation \cite{Kovalevsky}
\end{itemize}

\section{Preliminaries}
\label{prelims}

Much of this section is quoted or paraphrased from~\cite{BxSt16a} and other
papers cited.

\subsection{Basic notions of digital topology}
We will assume familiarity with the topological theory of digital images. See, e.g.,~\cite{Boxer94} for the standard definitions. All digital images $X$ are assumed to carry their own adjacency relations (which may differ from one image to another). When we wish to emphasize the particular adjacency relation we write the image as $(X,\kappa)$, where $\kappa$ represents
the adjacency relation.

Among the commonly used adjacencies are the $c_u$-adjacencies.
Let $x,y \in \Z^n$, $x \neq y$, where we consider these points as $n$-tuples of integers:
\[ x=(x_1,\ldots, x_n),~~~y=(y_1,\ldots,y_n).
\]
Let $u$ be an integer,
$1 \leq u \leq n$. We say $x$ and $y$ are $c_u$-adjacent if
\begin{itemize}
\item There are at most $u$ indices $i$ for which 
      $|x_i - y_i| = 1$.
\item For all indices $j$ such that $|x_j - y_j| \neq 1$ we
      have $x_j=y_j$.
\end{itemize}
We often label a $c_u$-adjacency by the number of points
adjacent to a given point in $\Z^n$ using this adjacency.
E.g.,
\begin{itemize}
\item In $\Z^1$, $c_1$-adjacency is 2-adjacency.
\item In $\Z^2$, $c_1$-adjacency is 4-adjacency and
      $c_2$-adjacency is 8-adjacency.
\item In $\Z^3$, $c_1$-adjacency is 6-adjacency,
      $c_2$-adjacency is 18-adjacency, and $c_3$-adjacency
      is 26-adjacency.
\item In $\Z^n$, $c_1$-adjacency is $2n$-adjacency and $c_n$-adjacency is $(3^n - 1)$-adjacency.
\end{itemize}

For $\kappa$-adjacent $x,y$, we write $x \adj_{\kappa} y$ or $x \adj y$ when $\kappa$ is understood. We write
$x \adjeq_{\kappa} y$ or $x \adjeq y$ to mean that either $x \adj_{\kappa} y$ or $x = y$.

A subset $Y$ of a digital image $(X,\kappa)$ is
{\em $\kappa$-connected}~\cite{Rosenfeld},
or {\em connected} when $\kappa$
is understood, if for every pair of points $a,b \in Y$ there
exists a sequence $\{y_i\}_{i=0}^m \subset Y$ such that
$a=y_0$, $b=y_m$, and $y_i \adj_{\kappa} y_{i+1}$ for $0 \leq i < m$.
The following generalizes a definition of~\cite{Rosenfeld}.

\begin{definition}\label{continuous}
{\rm ~\cite{Boxer99}}
Let $(X,\kappa)$ and $(Y,\lambda)$ be digital images. A single-valued function
$f: X \rightarrow Y$ is $(\kappa,\lambda)$-continuous if for
every $\kappa$-connected $A \subset X$ we have that
$f(A)$ is a $\lambda$-connected subset of $Y$. 
\end{definition}

When the adjacency relations are understood, we will simply say that $f$ is \emph{continuous}. Continuity can be reformulated in terms of adjacency of points:
\begin{thm}
{\rm ~\cite{Rosenfeld,Boxer99}}
A single-valued function $f:X\to Y$ is continuous if and only if $x \adj x'$ in $X$ implies $f(x) \adjeq f(x')$. \qed
\end{thm}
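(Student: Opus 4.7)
The plan is to prove both directions directly from the definition of $\kappa$-connectedness.

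For the forward implication, suppose $f$ is $(\kappa,\lambda)$-continuous and $x \adj_\kappa x'$ in $X$. Then the two-point set $A=\{x,x'\}$ is $\kappa$-connected (witnessed by the length-one sequence $x,x'$). By continuity, $f(A)=\{f(x),f(x')\}$ is $\lambda$-connected in $Y$. A one- or two-element subset of $Y$ is $\lambda$-connected exactly when its elements are equal or $\lambda$-adjacent, so $f(x)\adjeq_\lambda f(x')$.

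For the converse, assume the adjacency condition holds, and let $A\subset X$ be $\kappa$-connected. To show $f(A)$ is $\lambda$-connected, pick any $p,q\in f(A)$, choose preimages $a,b\in A$ with $f(a)=p$ and $f(b)=q$, and use $\kappa$-connectedness of $A$ to obtain a sequence $a=y_0,y_1,\dots,y_m=b$ in $A$ with $y_i\adj_\kappa y_{i+1}$. Applying the hypothesis gives $f(y_i)\adjeq_\lambda f(y_{i+1})$ for each $i$. Deleting any consecutive repetitions from the sequence $f(y_0),\dots,f(y_m)$ yields a sequence in $f(A)$ from $p$ to $q$ in which consecutive terms are $\lambda$-adjacent, exhibiting $f(A)$ as $\lambda$-connected.

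Neither direction poses a real obstacle; the only point that requires a moment of care is the bookkeeping between $\adjeq$ and $\adj$ in the second paragraph, since the definition of connectedness demands strict adjacency between consecutive terms, whereas the hypothesis only yields $\adjeq$. This is handled by the trivial step of collapsing runs of equal values in the image sequence.
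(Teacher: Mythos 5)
Your proof is correct. The paper itself states this theorem without proof (it is quoted from the cited references of Rosenfeld and Boxer), and your two-direction argument --- two-point connected sets for the forward implication, lifting a path and collapsing repeated image values for the converse --- is exactly the standard argument given in those sources; the one delicate point, the mismatch between $\adjeq$ in the hypothesis and strict adjacency in the definition of connectedness, is handled properly by your collapsing step.
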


For two subsets $A,B\subset X$, we will say that $A$ and $B$ are \emph{adjacent} when there exist points $a\in A$ and $b\in B$ such that
$a \adjeq b$. Thus sets with nonempty intersection are automatically adjacent, while disjoint sets may or may not be adjacent.

\subsection{Multivalued functions}
A \emph{multivalued function} $f$ from $X$ to $Y$ assigns a subset of $Y$ to each point of $x$. We will  write $f:X \multimap Y$. For $A \subset X$ and a multivalued function $f:X\multimap Y$, let $f(A) = \bigcup_{x \in a} f(x)$. 

\begin{definition}
\label{mildly}
\rm{\cite{Kovalevsky}}
A multivalued function $f:X\multimap Y$ is \emph{connectivity preserving} if $f(A)\subset Y$ is connected whenever $A\subset X$ is connected.
\end{definition}

As with Definition \ref{continuous}, we can reformulate connectivity preservation in terms of adjacencies.

\begin{thm}
\label{mildadj}
\rm{\cite{BxSt16a}}
A multivalued function $f:X \multimap Y$ is \emph{connectivity preserving} if and only if the following are satisfied:
\begin{itemize}
\item For every $x \in X$, $f(x)$ is a connected subset of $Y$.
\item For any adjacent points $x,x'\in X$, the sets $f(x)$ and $f(x')$ are adjacent. $\Box$
\end{itemize}
\end{thm}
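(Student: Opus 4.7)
The plan is a direct proof of both directions of the biconditional, using Definition~\ref{mildly} applied to carefully chosen connected subsets.

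For the forward direction, I would assume $f$ is connectivity preserving. For any $x \in X$ the singleton $\{x\}$ is connected, so by hypothesis $f(\{x\}) = f(x)$ is connected, yielding the first bullet. For the second, suppose $x \adjeq x'$; then $\{x,x'\}$ is connected, hence $f(x) \cup f(x')$ is connected in $Y$. Picking $a \in f(x)$ and $b \in f(x')$ and a $\lambda$-path from $a$ to $b$ inside $f(x) \cup f(x')$, I would split on whether $f(x) \cap f(x')$ is empty. If not, the intersection already witnesses adjacency. Otherwise, along the path there must exist a consecutive pair $y_i, y_{i+1}$ with $y_i \in f(x)$ and $y_{i+1} \in f(x')$, since the path starts in one piece and ends in the other; this pair gives the required adjacency of $f(x)$ and $f(x')$.

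For the backward direction, I would assume both bullets and let $A \subset X$ be connected. It suffices to show that any two points $y, y' \in f(A)$ lie in a common connected subset of $f(A)$. Choose $a, a' \in A$ with $y \in f(a)$, $y' \in f(a')$, and a $\kappa$-path $a = a_0 \adj a_1 \adj \cdots \adj a_m = a'$ in $A$. The main work is a short induction on $m$ showing that $\bigcup_{i=0}^{m} f(a_i)$ is a connected subset of $f(A)$. The base $m=0$ is the first bullet; for the step, the inductive hypothesis gives connectedness of $\bigcup_{i=0}^{m-1} f(a_i)$, the first bullet gives connectedness of $f(a_m)$, and the second bullet gives adjacency of $f(a_{m-1})$ to $f(a_m)$, hence of the larger union to $f(a_m)$. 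Since the union of two adjacent connected sets is connected, the induction closes and $y, y'$ lie in a connected subset of $f(A)$.

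The main obstacle is the extraction step in the forward direction: from connectedness of the union $f(x) \cup f(x')$ one must produce a concrete adjacent pair straddling the two sets. It hinges on the observation that a $\lambda$-path from a point of $f(x)$ to a point of $f(x')$, even if it crosses back and forth between the pieces, must contain at least one consecutive pair of points lying in opposite pieces. The remainder of the argument is routine bookkeeping with the definitions of connectedness, adjacency of sets, and path concatenation.
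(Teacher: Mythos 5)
Your proof is correct. Note that the paper itself does not prove this statement --- it is quoted from the cited reference with the proof omitted --- so there is no in-paper argument to compare against; your argument is the standard one and is complete: singletons and adjacent pairs give the two bullets in the forward direction (with the disjoint/nondisjoint split handling the extraction of an adjacent pair from a path in $f(x)\cup f(x')$), and in the backward direction the induction along a $\kappa$-path in $A$, together with the fact that the union of two adjacent connected sets is connected, shows any two points of $f(A)$ lie in a connected subset of $f(A)$, which suffices for connectedness.
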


The papers~\cite{egs08, egs12} define continuity for multivalued functions between digital images based on subdivisions. (These papers make an error with respect to compositions, which is corrected in \cite{gs15}.) We have the following.
\begin{definition}
\rm{\cite{egs08, egs12}}
For any positive integer $r$, the \emph{$r$-th subdivision} of $\Z^n$ is
\[ \Z_r^n = \{ (z_1/r, \dots, z_n/r) \mid z_i \in \Z \}. \]
An adjacency relation $\kappa$ on $\Z^n$ naturally induces an adjacency relation (which we also call $\kappa$) on $\Z_r^n$ as follows: $(z_1/r, \dots, z_n/r) \adj_{\kappa} (z'_1/r, \dots, z'_n/r)$ in $\Z^n_r$ if and only if
$(z_1, \dots, z_n) \adj_{\kappa}(z_1, \dots, z_n)$ in $\Z^n$.

Given a digital image $(X,\kappa) \subset (\Z^n,\kappa)$, the \emph{$r$-th subdivision} of $X$ is 
\[ S(X,r) = \{ (x_1,\dots, x_n) \in \Z^n_r \mid (\lfloor x_1 \rfloor, \dots, \lfloor x_n \rfloor) \in X \}. \]

Let $E_r:S(X,r) \to X$ be the natural map sending $(x_1,\dots,x_n) \in S(X,r)$ to $(\lfloor x_1 \rfloor, \dots, \lfloor x_n \rfloor)$. 

For a digital image $(X,\kappa) \subset (\Z^n,\kappa)$, a function $f:S(X,r) \to Y$ \emph{induces a multivalued function $F:X\multimap Y$} as follows:
\[ F(x) = \bigcup_{x' \in E^{-1}_r(x)} \{f(x')\}. \]

A multivalued function $F:X\multimap Y$ is called \emph{continuous} when there is some $r$ such that $F$ is induced by some single valued continuous function $f:S(X,r) \to Y$. 
\end{definition}

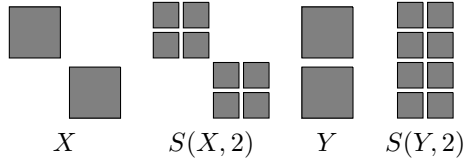
\begin{figure}
\begin{center}
\begin{tabular}{cccc}
\begin{tikzpicture}[scale=.4]
\foreach \x/\y in {1/0,0/1} {
	\filldraw[fill=gray, xshift=2*\x cm,yshift=2*\y cm]
		(45:1.2) \foreach \t in {135,225,315,45} { -- (\t:1.2) };
}
\end{tikzpicture}\qquad
&
\begin{tikzpicture}[scale=.2]
\foreach \x/\y in {2/0,2/1,3/0,3/1,0/2,0/3,1/2,1/3} {
	\filldraw[fill=gray, xshift=2*\x cm,yshift=2*\y cm]
		(45:1.2) \foreach \t in {135,225,315,45} { -- (\t:1.2) };
}
\end{tikzpicture}\qquad
&
\begin{tikzpicture}[scale=.4]
\foreach \x/\y in {0/0,0/1} {
	\filldraw[fill=gray, xshift=2*\x cm,yshift=2*\y cm]
		(45:1.2) \foreach \t in {135,225,315,45} { -- (\t:1.2) };
}
\end{tikzpicture}\qquad
&
\begin{tikzpicture}[scale=.2]
\foreach \x/\y in {0/0,0/1,1/0,1/1,0/2,0/3,1/2,1/3} {
	\filldraw[fill=gray, xshift=2*\x cm,yshift=2*\y cm]
		(45:1.2) \foreach \t in {135,225,315,45} { -- (\t:1.2) };
}
\end{tikzpicture}
\\
$X$ & $S(X,2)$ & $Y$ & $S(Y,2)$
\end{tabular}
\end{center}
\caption{\cite{BxSt16a} Two images $X$ and $Y$ with their second subdivisions. 
\label{subdivfig}}
\end{figure}

Note that in contrast with the definition of continuity, the definition of connectivity preservation makes no reference to $X$ as being embedded inside of any particular integer lattice $\Z^n$.

\begin{exl}
~\cite{BxSt16a}
An example of two spaces and their subdivisions is given in Figure \ref{subdivfig}.
Note that the subdivision construction (and thus the notion of continuity) depends on the particular embedding of $X$ as a subset of $\Z^n$. In particular we may have $X, Y \subset \Z^n$ with $X$ isomorphic to $Y$ but $S(X,r)$ not isomorphic to $S(Y,r)$. This is the case for the two images in Figure~\ref{subdivfig}, when we use 8-adjacency for all images: $X$ and $Y$ in the figure are isomorphic, each being a set of two adjacent points, but $S(X,2)$ and $S(Y,2)$ are not isomorphic since $S(X,2)$ can be disconnected by removing a single point, while this is impossible in $S(Y,2)$. $\Box$
\end{exl}

\begin{lem}
\label{multipleSubdivision}
\rm{\cite{BoxerNP}}
Let $F: (X, c_u) \multimap (Y,c_v)$ be a continuous multivalued function generated by the continuous single-valued function
$f: (S(X,r), c_u) \to (Y,c_v)$. Then for all $n \in \N$ there is a continuous single-valued function
$f': (S(X,nr), c_u) \to (Y,c_v)$ that generates $F$. $\Box$
\end{lem}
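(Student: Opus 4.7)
The plan is to pull back $f$ along a natural ``coarsening'' map $p\colon S(X, nr) \to S(X, r)$ and set $f' = f \circ p$. Writing $X \subset \Z^d$ (using $d$ for the ambient dimension to avoid a name clash with the multiplier $n$), and representing a point of $S(X, nr)$ as $(z_1/(nr), \ldots, z_d/(nr))$ with $z_i \in \Z$, I would define
\[
p\!\left(\frac{z_1}{nr}, \ldots, \frac{z_d}{nr}\right) = \left(\frac{\lfloor z_1/n \rfloor}{r}, \ldots, \frac{\lfloor z_d/n \rfloor}{r}\right).
\]
The elementary identity $\lfloor \lfloor a/n \rfloor / r \rfloor = \lfloor a/(nr) \rfloor$ gives $E_r \circ p = E_{nr}$, so $p$ in fact lands in $S(X, r)$.

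Next I would verify that $p$ is $(c_u, c_u)$-continuous. If two points of $S(X, nr)$ with lattice coordinates $(z_i)$ and $(w_i)$ are $c_u$-adjacent, then $|z_i - w_i| \leq 1$ for every $i$, with $|z_i - w_i| = 1$ in at most $u$ coordinates. Dividing by $n$ and taking floors can only preserve or shrink the coordinate-wise differences and leaves $\lfloor z_i/n \rfloor = \lfloor w_i/n \rfloor$ whenever $z_i = w_i$, so the images under $p$ are either equal or $c_u$-adjacent. This is exactly $(c_u, c_u)$-continuity via the adjacency reformulation.

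Setting $f' := f \circ p$, continuity of $f'$ is then immediate as a composition. It remains to check that $f'$ generates $F$, which reduces to the claim $p(E_{nr}^{-1}(x)) = E_r^{-1}(x)$ for each $x \in X$. The inclusion $\subset$ is the floor identity above. For $\supset$, given $y = (k_1/r, \ldots, k_d/r) \in E_r^{-1}(x)$, the point $x' = (nk_1/(nr), \ldots, nk_d/(nr))$ lies in $E_{nr}^{-1}(x)$ and satisfies $p(x') = y$. Consequently
\[
\bigcup_{x' \in E_{nr}^{-1}(x)} \{f'(x')\} \;=\; \bigcup_{y \in E_r^{-1}(x)} \{f(y)\} \;=\; F(x),
\]
so $f'$ generates $F$.

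The only steps requiring any care are the floor identity $\lfloor \lfloor a/n \rfloor / r \rfloor = \lfloor a/(nr) \rfloor$ (needed both to see $p$ is well-defined into $S(X,r)$ and to establish $E_r \circ p = E_{nr}$) and the surjectivity $p(E_{nr}^{-1}(x)) \supset E_r^{-1}(x)$. Once these are in place, both the continuity of $f'$ and the agreement of the induced multivalued functions are formal consequences, so I expect no significant obstacle.
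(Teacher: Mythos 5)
Your argument is correct: the floor identity $\lfloor\lfloor a/n\rfloor/r\rfloor=\lfloor a/(nr)\rfloor$ makes the coarsening map $p\colon S(X,nr)\to S(X,r)$ well defined and $(c_u,c_u)$-continuous, and the two inclusions you check give $p(E_{nr}^{-1}(x))=E_r^{-1}(x)$, so $f'=f\circ p$ is continuous and generates $F$. Note that this paper states the lemma without proof (it is quoted from the cited reference), and your pullback-along-the-natural-projection argument is essentially the standard one used there, so there is nothing further to reconcile.
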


\begin{prop}
\label{pt-images-connected}
\rm{\cite{egs08,egs12}}
Let $F:X\multimap Y$ be a continuous multivalued function
between digital images. Then
\begin{itemize}
\item for all $x \in X$, $F(x)$ is connected; and
\item $F$ is connectivity preserving.
\qed
\end{itemize}
\end{prop}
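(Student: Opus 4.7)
The plan is to apply Theorem \ref{mildadj}: if we can show that (i) $F(x)$ is connected for each $x$, and (ii) $F(x)$ and $F(x')$ are adjacent whenever $x \adj x'$ in $X$, then $F$ is connectivity preserving, which gives both bulleted claims at once. Fix an $r$ and a continuous single-valued $f:(S(X,r),\kappa) \to (Y,\lambda)$ that induces $F$, so that $F(x) = f(E_r^{-1}(x))$.

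For (i), the key observation is that the fiber $E_r^{-1}(x)$ is always $\kappa$-connected in $S(X,r)$: writing $x=(x_1,\ldots,x_n)$, the fiber consists of the points $(x_1+j_1/r,\ldots,x_n+j_n/r)$ with each $j_i \in \{0,1,\ldots,r-1\}$, and any two such points are joined by a path that changes one coordinate at a time in steps of $1/r$, each step being a $c_1$-adjacency (hence also a $c_u$-adjacency for any $u$). Continuity of $f$ in the single-valued sense (Definition \ref{continuous}) then sends this connected set to a $\lambda$-connected subset $F(x)$ of $Y$.

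For (ii), suppose $x \adj_\kappa x'$ in $X$. I would build representatives $y \in E_r^{-1}(x)$ and $y' \in E_r^{-1}(x')$ that are themselves $\kappa$-adjacent in $S(X,r)$. The recipe is coordinate-by-coordinate: on indices $i$ where $x_i=x'_i$ take $y_i=y'_i=x_i$; where $x'_i=x_i+1$ take $y_i = x_i + (r-1)/r$ and $y'_i=x'_i$ so that $y'_i - y_i = 1/r$; symmetrically where $x_i=x'_i+1$. Scaling by $r$, the integer tuples corresponding to $y,y'$ differ by $\pm 1$ in exactly the coordinates where $x$ and $x'$ differed, so by the definition of the induced adjacency on $\Z^n_r$ we get $y \adj_\kappa y'$. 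Single-valued continuity of $f$ then yields $f(y)\adjeq_\lambda f(y')$, which exhibits the required adjacency of $F(x)=f(E_r^{-1}(x))$ and $F(x')=f(E_r^{-1}(x'))$.

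The only subtle point is checking that the constructed $y$ and $y'$ really lie in the correct fibers (i.e., that $\lfloor y_i \rfloor = x_i$ and $\lfloor y'_i \rfloor = x'_i$) and that the induced adjacency on $\Z^n_r$ is used correctly; both are routine since each offset $j_i/r$ lies in $[0,1)$. With (i) and (ii) established, Theorem \ref{mildadj} finishes the proof.
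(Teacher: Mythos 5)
Your argument is correct, but note that the paper does not actually prove this proposition: it is imported from~\cite{egs08,egs12} and stated with a \qed, so there is no in-paper proof to compare against. Your reconstruction is the natural one and is sound in the intended setting: the fiber $E_r^{-1}(x)$ is an $r\times\cdots\times r$ block, connected via single-coordinate steps of $1/r$ (each a $c_1$-, hence $c_u$-, adjacency after scaling by $r$), so single-valued continuity of $f$ gives the first bullet; and your coordinate-by-coordinate choice of representatives $y\in E_r^{-1}(x)$, $y'\in E_r^{-1}(x')$ with offsets $(r-1)/r$ and $0$ in the differing coordinates does produce $y\adj_{\kappa} y'$ with the correct floors, so $f(y)\adjeq_{\lambda} f(y')$ exhibits adjacency of $F(x)$ and $F(x')$, and Theorem~\ref{mildadj} then yields connectivity preservation. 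The one caveat worth making explicit is that both steps silently assume $\kappa$ is a $c_u$-adjacency on $\Z^n$: for an arbitrary adjacency relation on $\Z^n$, neither the connectivity of the fiber nor the existence of adjacent representatives in adjacent fibers is automatic, and this is precisely the ``subdivisions preserving adjacency'' hypothesis the paper introduces before Theorem~\ref{cont-hierarchy}. Since the cited sources (and the later theorem) work with $c_u$-type adjacencies, your proof is fine as a proof of the proposition as intended, and in effect it also gives a direct proof of the special case of Theorem~\ref{cont-hierarchy} for $c_u$-adjacencies.
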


\begin{prop}
\label{1-to-all}
\rm{\cite{BxSt16a}}
Let $X$ and $Y$ be digital images. Suppose $Y$ is connected. Then the
multivalued function $f: X \multimap Y$ defined by
$f(x)=Y$ for all $x \in X$ is connectivity preserving. $\Box$
\end{prop}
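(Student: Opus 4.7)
The plan is to apply Theorem \ref{mildadj} as a direct verification, since the proposition's statement matches the two-part characterization almost verbatim. I expect no real obstacle; the only mild subtlety is the convention on the empty set, which is handled in a line.

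First, I would verify the pointwise condition of Theorem \ref{mildadj}: for every $x \in X$, the set $f(x) = Y$ is connected by hypothesis. Second, I would verify the adjacency-preservation condition: given adjacent points $x, x' \in X$, the sets $f(x)$ and $f(x')$ are both equal to $Y$, and assuming $Y \neq \emptyset$ (which we may assume, since otherwise there is nothing to check as $X$ itself must be empty for the statement to be nontrivial, or alternatively one observes $Y$ connected is vacuous), they share every point of $Y$, hence are adjacent. Both conditions of Theorem \ref{mildadj} are thus satisfied, so $f$ is connectivity preserving.

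Alternatively, one can give a one-line direct proof from Definition \ref{mildly}: for any connected $A \subset X$, either $A = \emptyset$, in which case $f(A) = \emptyset$ is vacuously connected, or $A \neq \emptyset$, in which case $f(A) = \bigcup_{x \in A} f(x) = Y$, which is connected by hypothesis. The hardest part is purely organizational — deciding whether to route the argument through Theorem \ref{mildadj} or go straight from Definition \ref{mildly}; I would choose the direct route for brevity.
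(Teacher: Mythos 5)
Your proof is correct; the paper itself states this proposition without proof (citing \cite{BxSt16a}), and your direct verification --- that $f(A)=Y$ is connected for any nonempty connected $A\subset X$, or equivalently the two conditions of Theorem~\ref{mildadj} --- is exactly the standard argument intended. The empty-set remark is unnecessary but harmless, since the direct route from Definition~\ref{mildly} already covers $A=\emptyset$ trivially.
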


\begin{prop}
\label{finite-to-infinite}
\rm{\cite{BxSt16a}}
Let $F: (X,\kappa) \multimap (Y,\lambda)$ be a multivalued
surjection between digital images $(X,\kappa)$ and $(Y,\kappa)$. If $X$ is finite and $Y$
is infinite, then $F$ is not continuous. $\Box$
\end{prop}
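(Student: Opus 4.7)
The plan is to argue by contradiction, pushing the finiteness hypothesis through the subdivision construction and then using surjectivity to derive a contradiction with $Y$ being infinite.

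Specifically, suppose for contradiction that $F : (X,\kappa) \multimap (Y,\lambda)$ is continuous. By definition, there exist some positive integer $r$ and a continuous single-valued $f : S(X,r) \to Y$ that induces $F$; here $X$ is regarded as a subset of some $\Z^n$. The first step is to observe that $S(X,r)$ is finite: since $E_r : S(X,r) \to X$ sends a point of $S(X,r)$ to the tuple of floors of its coordinates, the fiber $E_r^{-1}(x)$ over any $x \in X$ is a finite set (of size $r^n$), so $|S(X,r)| = r^n \cdot |X| < \infty$.

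Next I would unwind the image of $F$. From the defining formula
\[ F(x) = \bigcup_{x' \in E_r^{-1}(x)} \{f(x')\} \]
and the fact that the fibers of $E_r$ partition $S(X,r)$, we get
\[ F(X) \;=\; \bigcup_{x \in X} F(x) \;=\; \bigcup_{x' \in S(X,r)} \{f(x')\} \;=\; f(S(X,r)). \]
Thus $F(X)$ is the image of a finite set under a single-valued function, hence is finite.

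The final step is the contradiction: since $F$ is assumed surjective, $F(X) = Y$, so $Y$ must be finite, contradicting the hypothesis. There is no real obstacle here; the only point that requires a moment of care is the bookkeeping that $F(X)$ coincides with $f(S(X,r))$, which in turn relies on $\{E_r^{-1}(x)\}_{x \in X}$ being a partition of $S(X,r)$ — a direct consequence of the definition of $E_r$ as the componentwise floor map.
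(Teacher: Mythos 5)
Your proof is correct: finiteness of $X$ forces $S(X,r)$ to be finite, so $F(X)=f(S(X,r))$ is finite, contradicting surjectivity onto the infinite $Y$. This paper only cites \cite{BxSt16a} for the result rather than reproving it, but your argument is exactly the standard one used there, so no comparison issues arise.
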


\begin{cor}
\rm{\cite{BxSt16a}}
Let $F: X \multimap Y$ be the multivalued function
between digital images defined by
$F(x)=Y$ for all $x \in X$. If $X$ is finite and $Y$ is infinite and connected, then
$F$ is connectivity preserving but not continuous. $\Box$
\end{cor}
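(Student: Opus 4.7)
The plan is to observe that this corollary is essentially the conjunction of the two immediately preceding propositions applied to the specific function $F(x)=Y$, so the proof should amount to verifying that the hypotheses of each proposition are met.

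First I would establish connectivity preservation. Since $Y$ is assumed connected, the hypothesis of Proposition \ref{1-to-all} is satisfied for the function $F(x)=Y$, which is precisely the function described here. Thus Proposition \ref{1-to-all} immediately gives that $F$ is connectivity preserving.

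Next I would establish non-continuity. The function $F$ defined by $F(x)=Y$ for all $x \in X$ is trivially a surjection onto $Y$, since for any fixed $x_0 \in X$ we already have $F(x_0)=Y$. With $X$ finite and $Y$ infinite, the hypotheses of Proposition \ref{finite-to-infinite} are met, and we conclude that $F$ is not continuous.

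There is no real obstacle here: the entire content of the argument is the observation that $F(x)=Y$ makes $F$ automatically surjective, so both prior propositions apply directly. The only thing worth noting for the reader is that the connectedness of $Y$ plays a role only in the first half (to invoke Proposition \ref{1-to-all}), while the cardinality gap between $X$ and $Y$ plays the role in the second half (to invoke Proposition \ref{finite-to-infinite}); together they exhibit $F$ as a concrete example separating connectivity preservation from continuity.
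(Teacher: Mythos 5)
Your proposal is correct and matches the intended argument: the corollary is stated immediately after Propositions~\ref{1-to-all} and~\ref{finite-to-infinite} precisely because it follows by combining them, with connectedness of $Y$ giving connectivity preservation and surjectivity of $F$ (trivial since $F(x)=Y$) together with the finite-to-infinite hypothesis ruling out continuity. The paper itself offers no separate proof beyond this juxtaposition, so your write-up supplies exactly the expected reasoning.
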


Other examples of connectivity preserving but not continuous multivalued functions on finite spaces are given in~\cite{BxSt16a}. 

Other terminology we use includes the following.
Given a digital image $(X,\kappa) \subset \Z^n$ and $x \in X$, the set of points adjacent to $x \in \Z^n$ is
\[N_{\kappa}(x) = \{y \in \Z^n \, | \, y \adj_{\kappa} x\}.\]

\subsection{Weak and strong multivalued continuity}
Other notions of continuity have been given
for multivalued functions between graphs (equivalently,
between digital images). We have the following.

\begin{definition}
\rm{~\cite{Tsaur}}
\label{Tsaur-def}
Let $F: X \multimap Y$ be a multivalued function between
digital images.
\begin{itemize}
\item $F$ has {\em weak continuity}, or is {\em weakly continuous}, if for each pair of
      adjacent $x,y \in X$, $f(x)$ and $f(y)$ are adjacent
      subsets of $Y$.
\item $F$ has {\em strong continuity}, or is {\em strongly continuous}, if for each pair of
      adjacent $x,y \in X$, every point of $f(x)$ is adjacent
      or equal to some point of $f(y)$ and every point of 
      $f(y)$ is adjacent or equal to some point of $f(x)$.
     \qed
\end{itemize}
\end{definition}

Clearly, strong continuity implies weak continuity. Example~\ref{weakNotStrong} below shows that the converse
assertion is false.

\begin{thm}
\label{conn-preserving-char}
\rm{\cite{BxSt16a}}
A multivalued function $F: X \multimap Y$ is connectivity
preserving if and only if the following are satisfied:
\begin{itemize}
\item $F$ has weak continuity.
\item For every $x \in X$, $F(x)$ is a connected subset of $Y$.
$\Box$
\end{itemize}
\end{thm}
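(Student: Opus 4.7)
The plan is to deduce this theorem directly from Theorem \ref{mildadj}, which already characterizes connectivity preservation by two bullet points. I would simply observe that the second bullet of Theorem \ref{mildadj} — ``for any adjacent $x,x' \in X$, $F(x)$ and $F(x')$ are adjacent'' — is verbatim the definition of weak continuity given in Definition \ref{Tsaur-def}. Once this identification is made, the two conditions listed in the present theorem are word-for-word the two conditions in Theorem \ref{mildadj}, so the equivalence is immediate.

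More concretely, for the forward direction I would assume $F$ is connectivity preserving and invoke Theorem \ref{mildadj} to obtain both that each $F(x)$ is connected and that adjacent points in $X$ have adjacent $F$-images; the latter is exactly weak continuity per Definition \ref{Tsaur-def}. For the reverse direction I would assume weak continuity together with connectedness of each fiber $F(x)$; unpacking weak continuity to its defining statement yields the second bullet of Theorem \ref{mildadj}, and the first bullet is the fiber-connectedness assumption, so Theorem \ref{mildadj} again yields connectivity preservation.

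There is essentially no obstacle here; the theorem is best viewed as a notational reformulation of the earlier characterization in the language of weak continuity recently introduced from~\cite{Tsaur}. The only thing worth checking carefully is that the two notions of ``adjacent subsets'' agree — namely, the definition used in Theorem \ref{mildadj} (via the convention that subsets $A,B$ are adjacent when some $a \in A$ and $b \in B$ satisfy $a \adjeq b$, as stated just before the multivalued-function subsection) coincides with the use of ``adjacent subsets of $Y$'' in Definition \ref{Tsaur-def}. Since the paper uses the single convention throughout, this identification is automatic and the proof reduces to citing Theorem \ref{mildadj}.
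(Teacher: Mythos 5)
Your proposal is correct: the paper gives no proof of this statement (it is quoted from~\cite{BxSt16a}), and your reduction is exactly the intended one, since the second bullet of Theorem~\ref{mildadj} is verbatim the definition of weak continuity in Definition~\ref{Tsaur-def}, with both using the same convention that subsets $A,B$ are adjacent when some $a\in A$, $b\in B$ satisfy $a \adjeq b$. The only caveat is that within this paper Theorem~\ref{mildadj} is itself an imported result, so your argument is a legitimate restatement rather than an independent proof, which matches how the paper treats the theorem.
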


\begin{exl}
\label{pt-images-discon}
\rm{\cite{BxSt16a}}
If $F: [0,1]_{\Z} \multimap [0,2]_{\Z}$ is defined by
$F(0)=\{0,2\}$, $F(1)=\{1\}$, then $F$ has both weak and
strong continuity. Thus a multivalued function between
digital images that has weak or strong continuity need not
have connected point-images. By Theorem~\ref{mildadj} and
Proposition~\ref{pt-images-connected} it
follows that neither having weak continuity nor having
strong continuity implies that a multivalued function is
connectivity preserving or continuous.
$\Box$
\end{exl}

\begin{exl}
\rm{\cite{BxSt16a}}
\label{weakNotStrong}
Let $F: [0,1]_{\Z} \multimap [0,2]_{\Z}$ be defined by
$F(0)=\{0,1\}$, $F(1)=\{2\}$. Then $F$ is continuous and
has weak continuity but
does not have strong continuity. $\Box$
\end{exl}

\section{Continuous and other structured multivalued functions}
\label{contAndOthers}
We say a digital image $(X,\kappa)$ has {\em subdivisions preserving adjacency} if
whenever $x \adj_{\kappa} x'$ in $X$, there exist $x_0 \in S(\{x\},r)$ and $x_0' \in S(\{x'\},r)$
such that $x_0 \adj_{\kappa} x_0'$. Images with any $c_u$-adjacency or with the
generalized normal product adjacency~\cite{BoxerNP} $NP_u(c_{u_1},\ldots,c_{u_v})$ on $\Pi_{i=1}^v X_i$,
where $c_{u_i}$ is the adjacency of $X_i$, have subdivisions preserving adjacency.

\begin{thm}
\label{cont-hierarchy}
\rm{\cite{egs08,egs12,BxSt16a}}
Let $(X,\kappa)$ and $(Y,\lambda)$ have subdivisions preserving adjacency.
If $F:(X,\kappa) \multimap (Y,\lambda)$ 
is a continuous multivalued function, then $F$ is connectivity preserving. $\Box$
\end{thm}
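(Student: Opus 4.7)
The plan is to verify the characterization of connectivity preservation given in Theorem~\ref{mildadj}: namely, that (i) each point-image $F(x)$ is $\lambda$-connected, and (ii) whenever $x \adj_\kappa x'$ in $X$, the sets $F(x)$ and $F(x')$ are adjacent as subsets of $Y$. Since $F$ is continuous, I would begin by fixing some $r$ and a continuous single-valued $f\colon (S(X,r),\kappa) \to (Y,\lambda)$ that induces $F$, so that $F(x)=f(E_r^{-1}(x))=f(S(\{x\},r))$.

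For (i), I would observe that $S(\{x\},r)$ is, for each adjacency in the stated class (any $c_u$ or generalized normal product $NP_u(c_{u_1},\ldots,c_{u_v})$), an $r\times\cdots\times r$ sub-grid that is $\kappa$-connected inside $S(X,r)$; continuity of the single-valued $f$ then sends this connected set to a connected subset of $Y$, giving connectedness of $F(x)$. For (ii), I would apply the subdivisions-preserving-adjacency hypothesis on $X$ to the pair $x\adj_\kappa x'$, obtaining $x_0\in S(\{x\},r)$ and $x_0'\in S(\{x'\},r)$ with $x_0\adj_\kappa x_0'$ in $S(X,r)$; continuity of $f$ then yields $f(x_0)\adjeq_\lambda f(x_0')$, and since $f(x_0)\in F(x)$ and $f(x_0')\in F(x')$, the two point-images are adjacent, as required. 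An appeal to Theorem~\ref{mildadj} then finishes the proof.

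The main obstacle I anticipate is matching the subdivision parameter $r$ supplied by continuity of $F$ to the value of $r$ at which the subdivisions-preserving-adjacency property is invoked. If the defining condition is understood as holding for every $r$ (which is the case for each of the examples named after the definition), the argument above goes through directly; otherwise, Lemma~\ref{multipleSubdivision} lets me replace $f$ by an induced continuous $f'$ on $S(X,nr)$ for arbitrary $n\in\N$, so $r$ can be enlarged until an adjacent pair in the subdivisions is guaranteed. It is worth noting that the argument uses the subdivisions-preserving-adjacency hypothesis only on $(X,\kappa)$; the corresponding assumption on $(Y,\lambda)$ seems not to be needed for this implication.
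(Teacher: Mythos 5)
Your proposal is correct, and it is essentially the standard argument: the paper itself states this theorem only as a citation (with no proof given), but your step (ii) is word-for-word the argument the paper uses right afterwards to show that continuity implies weak continuity, and combining (i) and (ii) via Theorem~\ref{mildadj} (equivalently Theorem~\ref{conn-preserving-char}) is exactly the intended route. One small remark on step (i): connectedness of the point-images is already available as the first bullet of Proposition~\ref{pt-images-connected}, so you could cite that instead of arguing that $S(\{x\},r)$ is a connected grid; your direct argument is fine for the $c_u$ and normal-product adjacencies, but note that the abstract ``subdivisions preserving adjacency'' hypothesis by itself does not assert that $S(\{x\},r)$ is connected, so as written your (i) tacitly restricts to those concrete adjacency classes. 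Your handling of the subdivision parameter via Lemma~\ref{multipleSubdivision}, and your observation that the hypothesis on $(Y,\lambda)$ is not used in this implication, are both correct.
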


\begin{thm}
Let $(X,\kappa)$ and $(Y,\lambda)$ have subdivisions preserving adjacency.
Let $F: (X,\kappa) \multimap (Y,\lambda)$ be continuous. Then $F$ is $(\kappa,\lambda)$-weakly continuous.
\end{thm}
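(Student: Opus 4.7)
The plan is to unpack the definition of multivalued continuity (via a generating single-valued continuous map on a subdivision), then use the subdivisions-preserving-adjacency hypothesis on $X$ to produce a pair of adjacent points in $S(X,r)$ whose images under the generating map witness the required adjacency of $F(x)$ and $F(x')$.

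Concretely, suppose $x \adj_\kappa x'$ in $X$. Since $F$ is continuous, there is some $r$ and a $(\kappa,\lambda)$-continuous single-valued $f: S(X,r) \to Y$ inducing $F$, so that
\[ F(x) = \bigcup_{x'' \in E_r^{-1}(x)} \{f(x'')\} = f(S(\{x\},r)), \]
and similarly $F(x') = f(S(\{x'\},r))$ (using that $E_r^{-1}(x) = S(\{x\},r)$ by the definitions). Applying the subdivisions-preserving-adjacency property of $(X,\kappa)$ to the pair $x \adj_\kappa x'$, I get points $x_0 \in S(\{x\},r)$ and $x_0' \in S(\{x'\},r)$ with $x_0 \adj_\kappa x_0'$ in $S(X,r)$.

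Now invoke the adjacency reformulation of single-valued continuity: since $f$ is $(\kappa,\lambda)$-continuous and $x_0 \adj_\kappa x_0'$, we have $f(x_0) \adjeq_\lambda f(x_0')$ in $Y$. But $f(x_0) \in F(x)$ and $f(x_0') \in F(x')$, so by the definition of adjacency of subsets, $F(x)$ and $F(x')$ are adjacent subsets of $Y$. As $x,x'$ were arbitrary adjacent points, $F$ is weakly continuous.

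The proof is essentially a chaining of definitions, and there is no serious obstacle; the only substantive ingredient is the subdivisions-preserving-adjacency hypothesis on the domain $X$, which is exactly what lets us lift the adjacency $x \adj x'$ in $X$ to an adjacency in $S(X,r)$ where the single-valued continuity of the generating $f$ can be applied. Note that the hypothesis on $Y$ is not actually needed for this implication.
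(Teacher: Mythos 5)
Your proof is correct and follows essentially the same argument as the paper: take a generating continuous single-valued $f$ on $S(X,r)$, lift the adjacency $x \adj_{\kappa} x'$ to adjacent points of $S(\{x\},r)$ and $S(\{x'\},r)$ via the subdivisions-preserving-adjacency hypothesis, and apply the adjacency characterization of single-valued continuity. Your side remark that the hypothesis on $(Y,\lambda)$ is not used is a fair observation, but otherwise the two proofs coincide.
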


\begin{proof}
Let $f: S(X,r) \to Y$ be a $(\kappa,\lambda)$-continuous function that generates $F$.
Let $x \adj_{\kappa} x'$ in $X$. Then there exist $x_0 \in S(\{x\},r)$ and $x_0' \in S(\{x'\},r)$ such
that $x_0 \adj_{\kappa} x_0'$. Then we have $f(x_0) \in F(x)$, $f(x_0') \in F(x')$, and
$f(x_0) \adjeq_{\lambda} f(x_0')$. Thus, $F(X)$ and $F(x')$ are $\lambda$-adjacent sets, so $F$ is weakly continuous.
\end{proof}

The following example shows that continuity does not imply strong continuity.

\begin{exl}
Let $F: [0,1]_{\Z} \multimap [0,2]_{\Z}$ be defined by $F(0)=\{0\}$, $F(1)=\{1,2\}$. Then $F$ is
$(c_1,c_1)$-continuous but not $(c_1,c_1)$-strongly continuous.
\end{exl}

\begin{proof}
It is easy to see that $F$ is continuous. That $F$ is not strongly continuous follows from the observation that
$0 \adj_{c_1} 1$, but there isn't a member of $F(0)$ that is $c_1$-adjacent to $2 \in F(1)$.
\end{proof}

The following example shows that neither weak continuity nor strong continuity implies continuity.

\begin{exl}
Let $F: [0,1]_{\Z} \multimap [0,2]_{\Z}$ be defined by $F(0)=\{1\}$, $F(1)=\{0,2\}$. Then
$F$ is $(c_1,c_1)$-weakly continuous and $(c_1,c_1)$-strongly continuous but is not $(c_1,c_1)$-continuous.
\end{exl}

\begin{proof}
It is easily seen that $F$ is weakly and strongly continuous. Since $F(1)$ is not $c_1$-connected, by
Theorem~\ref{cont-hierarchy} we can conclude that $F$ is not continuous.
\end{proof}

Examples in~\cite{BxSt16a} show that a connectivity preserving multivalued function between digital images need
not be continuous.

\section{Composition}
\label{compSection}
Suppose $f: (X,\kappa) \multimap (Y,\lambda)$ and $g: (Y, \lambda) \multimap (W,\mu)$ are
multivalued functions between digital images. What properties of $f$ and $g$ are preserved by their
composition? The following are known concerning the
multivalued function $g \circ f: X \multimap W$.
\begin{itemize}
\item \cite{gs15}
      \begin{equation}
      \label{composition-continuity}
      \mbox{If } f \mbox{ and } g \mbox{ are both continuous, } g \circ f \mbox{ need not be continuous.}
      \end{equation}
      There are additional hypotheses explored in~\cite{gs15} under which a composition $g \circ f$ of
      continuous multivalued functions is continuous.
\item \cite{BxSt16a} If $f$ and $g$ are both connectivity preserving, then $g \circ f$ is connectivity
      preserving.
\end{itemize}

We have the following.

\begin{thm}
Let $f: (X,\kappa) \multimap (Y,\lambda)$ and $g: (Y, \lambda) \multimap (W,\mu)$ be
multivalued functions between digital images.
\begin{itemize}
\item If $f$ and $g$ are both weakly continuous, then $g \circ f$ is weakly continuous.
\item If $f$ and $g$ are both strongly continuous, then $g \circ f$ is strongly continuous.
\end{itemize}
\end{thm}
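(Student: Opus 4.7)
The plan is to unwind the definitions of weak and strong continuity and push adjacencies through $f$ and then $g$. Throughout, one has to be careful about the distinction between $\adj$ (strict adjacency) and $\adjeq$ (adjacent or equal), since Definition~\ref{Tsaur-def} allows the ``adjacent or equal'' case for strong continuity, and that case has to be tracked through the composition.

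For weak continuity, I would start with an arbitrary pair $x \adj_\kappa x'$ in $X$. Weak continuity of $f$ gives points $y \in f(x)$ and $y' \in f(x')$ with $y \adjeq_\lambda y'$. If $y = y'$, then for any $w \in g(y) = g(y')$ we have $w \in (g \circ f)(x) \cap (g \circ f)(x')$, so these point-images share a point and are trivially adjacent. If instead $y \adj_\lambda y'$, then weak continuity of $g$ produces $w \in g(y)$ and $w' \in g(y')$ with $w \adjeq_\mu w'$; since $w \in (g \circ f)(x)$ and $w' \in (g \circ f)(x')$, this witnesses the required adjacency of $(g \circ f)(x)$ and $(g \circ f)(x')$.

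For strong continuity, again fix $x \adj_\kappa x'$ and let $w \in (g \circ f)(x)$ be arbitrary; by definition of the composition, there is some $y \in f(x)$ with $w \in g(y)$. Strong continuity of $f$ gives a $y' \in f(x')$ with $y \adjeq_\lambda y'$. If $y = y'$, then $w \in g(y') \subset (g \circ f)(x')$ and so $w$ is equal to itself as a point of $(g \circ f)(x')$. If $y \adj_\lambda y'$, then strong continuity of $g$ applied to this adjacent pair in $Y$ guarantees some $w' \in g(y') \subset (g \circ f)(x')$ with $w \adjeq_\mu w'$. The reverse condition (every point of $(g \circ f)(x')$ is $\adjeq_\mu$-related to some point of $(g \circ f)(x)$) follows from a symmetric argument, exchanging the roles of $x$ and $x'$.

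The only real subtlety is the equality-versus-adjacency split, and I would keep the two cases ($y = y'$ vs.\ $y \adj y'$) explicit. No use is made of continuity in the subdivision sense or of any embedding in $\Z^n$, so neither ``subdivisions preserving adjacency'' nor any restriction on $\kappa,\lambda,\mu$ is needed. I do not anticipate a substantive obstacle; the argument is a direct chase through the definitions.
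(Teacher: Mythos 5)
Your proposal is correct and follows essentially the same route as the paper's proof: a direct chase of adjacencies through the definitions, pushing a witness pair (for weak continuity) or an arbitrary point of $(g \circ f)(x)$ (for strong continuity) through $f$ and then $g$, with symmetry handling the reverse condition. The only difference is that you make the $y = y'$ versus $y \adj_\lambda y'$ case split explicit, which the paper's proof glosses over by applying the continuity of $g$ directly to $y \adjeq_\lambda y'$; your version is the more careful reading of Definition~\ref{Tsaur-def}.
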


\begin{proof}
Suppose $f$ and $g$ are both weakly continuous. Let $x \adj_{\kappa} x'$ in $X$. Then there exist
$y \in f(x)$ and $y' \in f(x')$ such that $y \adjeq_{\lambda} y'$. Therefore, there exist
$w \in g(y) \subset g \circ f(x)$ and $w' \in g(y') \subset g \circ f(x')$ such that
$w \adjeq_{\mu} w'$. Thus, $g \circ f$ is weakly continuous.

Suppose $f$ and $g$ are both strongly continuous. Let $x \adj_{\kappa} x'$ in $X$. Then for each
$y \in f(x)$ there exists $y' \in f(x')$ such that $y \adjeq_{\lambda} y'$. Then for each
$w \in g(y) \subset g \circ f(x)$ there exists $w' \in g(y') \subset g \circ f(x')$ such that $w \adjeq_{\mu} w'$.
Since $y$ was taken as an arbitrary member of $f(x)$, it follows that for each $w \in g \circ f(x)$ there
exists $w' \in g \circ f(x')$ such that $w \adjeq_{\mu} w'$. Similarly, for each $w' \in g \circ f(x')$ there
exists $w \in g \circ f(x)$ such that $w' \adjeq_{\mu} w$. Thus, $g \circ f$ is strongly continuous.
\end{proof}

\section{Retractions and extensions}
\label{retAndExt}
Retractions and extensions are studied for single-valued continuous functions between digital images
in~\cite{Boxer94}, for continuous multivalued functions in~\cite{egs08,egs12},
and for connectivity preserving multivalued functions between digital images in~\cite{BxSt16a}.
In this section, we obtain more results for retractions and extensions among multivalued functions.

Since we wish to study multivalued functions that have properties of retractions and that
are any of continuous, weakly continuous, strongly continuous, or connectivity preserving,
we will call a multivalued function $F: X \multimap A$ a retraction if $A \subset X$ and for all $a \in A$,
$F(a) = \{a\}$.

\begin{thm}
\label{retAndExtendCont}
Let $A \subset (X, \kappa)$. Then there is a multivalued continuous retraction $R: X \multimap A$ if and only if for
each continuous single-valued function $f: A \to Y$, there is a continuous multivalued function $F: X \multimap Y$ that
extends $f$.
\end{thm}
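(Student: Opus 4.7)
The plan is to establish each direction by a direct construction tied to the subdivision-based definition of continuity, with the two directions being essentially independent.

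For the forward direction, let $R: X \multimap A$ be a continuous retraction, induced by a single-valued continuous $r: S(X,s) \to A$ for some $s$. Given any continuous single-valued $f:A\to Y$, I would form the ordinary single-valued composition $f \circ r : S(X,s) \to Y$. Composition of single-valued continuous maps is continuous (this falls straight out of the adjacency characterization: if $x \adj x'$ then $r(x)\adjeq r(x')$, and then $f(r(x))\adjeq f(r(x'))$ by continuity of $f$), so $f\circ r$ induces some multivalued $F:X\multimap Y$. To see $F$ extends $f$: for each $a\in A$, the retraction property $R(a)=\{a\}$ forces $r(x')=a$ for every $x'\in E_s^{-1}(a)$, so $F(a) = \{f(r(x')) : x'\in E_s^{-1}(a)\} = \{f(a)\}$.

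For the reverse direction, a one-line argument suffices: take $Y=A$ and $f=\id_A:A\to A$, which is trivially continuous. By hypothesis there is a continuous multivalued extension $F:X\multimap A$, and because $F$ extends $\id_A$ we have $F(a)=\{\id_A(a)\}=\{a\}$ for every $a\in A$. Thus $F$ itself is a continuous multivalued retraction.

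The one subtlety worth flagging is the forward direction's use of composition, given the warning in \eqref{composition-continuity} that a composition of two continuous multivalued functions can fail to be continuous. The argument above sidesteps this obstruction because the outer factor $f$ is single-valued: the composition is performed at the level of the subdivision-level generators $r$ and $f$, where both are single-valued and well-behaved under composition. No additional hypothesis such as subdivisions preserving adjacency on $Y$ is required.
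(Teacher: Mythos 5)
Your proof is correct and follows essentially the same route as the paper's: composing $f$ with the single-valued generator of $R$ at the subdivision level for the forward direction, and applying the hypothesis to $\id_A$ for the converse. Your explicit check that $F(a)=\{f(a)\}$ via $r(x')=a$ for all $x'\in E_s^{-1}(a)$ is a detail the paper leaves implicit, but the argument is the same.
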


\begin{proof}
Suppose there is a multivalued continuous retraction $R: X \multimap A$. Then there is a continuous single-valued
function $R': S(X,r) \to A$ that generates $R$. Given a continuous single-valued function $f: A \to Y$,
$f \circ R': S(X,r) \to Y$ is a continuous extension of $f$ and therefore generates a continuous multivalued extension of
$f$ from $X$ to $Y$.

Suppose given any continuous single-valued function $f: A \to Y$ for any $Y$, there is a continuous multivalued function $F: X \multimap Y$ that
extends $f$. Then, in particular, $1_A: A \to A$ extends to a continuous multivalued retraction $R: X \multimap A$.
\end{proof}

The composition of continuous multivalued functions between digital images need not be continuous~\cite{gs15}. However, we have the following (note for $X \subset Z^m$, $c_m$-adjacency is $(3^m - 1)$-adjacency~\cite{Han03}).

\begin{thm}
\label{whenCompositionCont}
\rm{\cite{gs15}}
Let $X \subset (Z^m, 3^m-1)$, $Y \subset (\Z^n, c_u)$ where $1 \le u \le n$, $W \subset (\Z^p,c_v)$ where $1 \le v \le p$. Suppose
$F: X \multimap Y$ and $G: Y \multimap W$ are continuous multivalued functions. Then $G \circ F: (X, 3^m-1) \multimap (W, c_v)$ is a
continuous multivalued function. $\Box$
\end{thm}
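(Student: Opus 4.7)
The plan is to construct a subdivision level $R$ and a $(c_m, c_v)$-continuous single-valued function $h \colon S(X, R) \to W$ that generates $G \circ F$. By Lemma~\ref{multipleSubdivision} I may assume $F$ is generated by a continuous $f \colon S(X, r) \to Y$ and $G$ by a continuous $g \colon S(Y, s) \to W$, and I may enlarge $r$ or $s$ by any integer factor.

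Take $R = rs$. Each $\tilde{x} \in S(X, rs)$ decomposes canonically as a pair $(\tilde{x}_1, \delta(\tilde{x}))$, where $\tilde{x}_1 \in S(X, r)$ is the coarser representative (the floor of $\tilde{x}$ in the $\tfrac{1}{r}$-lattice) and $\delta(\tilde{x}) \in \{0, \ldots, s-1\}^m$ records the local offset of $\tilde{x}$ within the fiber over $\tilde{x}_1$. Let $y = f(\tilde{x}_1) \in Y \subset \Z^n$. I would define a selection $\psi(\tilde{x}) \in E_s^{-1}(y) \subset S(Y, s)$ by using $\delta(\tilde{x})$ to index into $E_s^{-1}(y)$ coordinate-wise (padding with zeros when $m > n$; when $m < n$, first enlarging $r$ by a factor $\lceil s^{(n-m)/m} \rceil$ via Lemma~\ref{multipleSubdivision} so that the $m$-dimensional offset data is rich enough to surject onto the $s^n$-point preimage). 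Put $h(\tilde{x}) = g(\psi(\tilde{x}))$.

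I would then verify two properties. For generation, as $\tilde{x}$ ranges over $E_{rs}^{-1}(x)$, the pair $(\tilde{x}_1, \delta(\tilde{x}))$ ranges over $E_r^{-1}(x) \times \{0, \ldots, s-1\}^m$, so $\psi(\tilde{x})$ surjects onto $\bigcup_{\tilde{x}_1 \in E_r^{-1}(x)} E_s^{-1}(f(\tilde{x}_1)) = E_s^{-1}(F(x))$, yielding $h(E_{rs}^{-1}(x)) = g(E_s^{-1}(F(x))) = G(F(x)) = (G \circ F)(x)$. For continuity, suppose $\tilde{x} \adj_{c_m} \tilde{x}'$ in $S(X, rs)$, so that all coordinates differ by at most $1/(rs)$. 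Then the coarsenings satisfy $\tilde{x}_1 \adjeq_{c_m} \tilde{x}_1'$ in $S(X, r)$, which gives $f(\tilde{x}_1) \adjeq_{c_u} f(\tilde{x}_1')$ in $Y$ by continuity of $f$. The offsets $\delta(\tilde{x}), \delta(\tilde{x}')$ also differ by at most $1$ coordinate-wise. Designing $\psi$ so that its dependence on $\delta$ is coordinate-wise monotone, and using that $c_u$-adjacency has subdivisions preserving adjacency, one arranges $\psi(\tilde{x}) \adjeq_{c_u} \psi(\tilde{x}')$ in $S(Y, s)$; continuity of $g$ then yields $h(\tilde{x}) \adjeq_{c_v} h(\tilde{x}')$, as required.

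The main obstacle is engineering $\psi$ so that its offset-dependence is simultaneously surjective on each fiber (for generation) and varies by at most one coordinate step when $\tilde{x}$ varies by one step (for continuity). The hypothesis that $X$ carries the $c_m = (3^m - 1)$-adjacency is critical here: adjacency in $S(X, rs)$ reduces to coordinate-wise $1/(rs)$-closeness, giving full freedom to prescribe $\psi$ one coordinate at a time and to match it against an analogous coordinate-wise structure in $S(Y, s)$. Weaker adjacencies on $X$ produce more rigid subdivision graphs that defeat this coordinate-wise design, which is consistent with the general failure recorded in~(\ref{composition-continuity}).
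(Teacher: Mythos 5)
First, note that the paper itself offers no proof of this theorem: it is quoted from~\cite{gs15} (hence the terminal $\Box$), so your argument can only be judged on its own merits, and as it stands it has genuine gaps at exactly the hard points. The first is quantitative: with $R=rs$, each fiber of $E\colon S(X,rs)\to S(X,r)$ has only $s^m$ points, while $E_s^{-1}(f(\tilde x_1))$ has $s^n$ points, so when $m<n$ your $\psi$ cannot be fiberwise surjective and the generation identity $h(E_{rs}^{-1}(x))=(G\circ F)(x)$ can simply fail (take $X$ a single point of $\Z^m$, $r=1$, $F(x)=\{y\}$, and $g$ injective on $E_s^{-1}(y)$: then $(G\circ F)(x)$ has $s^n$ points but $S(X,rs)$ has only $s^m$ points over $x$, so no map at level $rs$ generates $G\circ F$). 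Your proposed patch does not repair this: enlarging $r$ via Lemma~\ref{multipleSubdivision} changes neither the size $s^m$ of the fibers over the new coarse lattice nor, if you keep indexing by the offset $\delta\in\{0,\dots,s-1\}^m$, the number of available index values. What is actually needed is to work at a level $R=rt$ with $t$ chosen large relative to $s^n$ (or, more efficiently, relative to $|G(f(\tilde x_1))|$), i.e., to refine the fibers themselves, and your write-up never sets this up correctly.

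The second and more serious gap is the continuity of $\psi$ across fiber boundaries, which you dispose of with ``designing $\psi$ so that its dependence on $\delta$ is coordinate-wise monotone \dots one arranges $\psi(\tilde x)\adjeq_{c_u}\psi(\tilde x')$.'' Two problems: (i) your intermediate claim that $\delta(\tilde x)$ and $\delta(\tilde x')$ differ by at most $1$ in each coordinate is false precisely when $\tilde x,\tilde x'$ lie in different fibers, where an offset coordinate wraps from $s-1$ to $0$; and (ii) a fixed coordinate-wise monotone indexing cannot work, because the relative position of the target blocks $E_s^{-1}(f(\tilde x_1))$ and $E_s^{-1}(f(\tilde x_1'))$ in $\Z^n_s$ is dictated by the direction of $f(\tilde x_1')-f(\tilde x_1)$ in $\Z^n$, which is unrelated to the direction of $\tilde x_1'-\tilde x_1$ in $\Z^m$. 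Already for $m=n=1$, $X=Y=\{0,1\}$, $r=1$ and $f(0)=1$, $f(1)=0$, increasing indexing sends the boundary points $(s-1)/s$ and $1$ of $S(X,s)$ to points of $S(Y,s)$ at distance nearly $2$, so $h$ need not be continuous; with several neighbors in several directions, and with surjectivity onto each fiber also required, making consistent choices is the actual content of the theorem (this is where the maximality of the adjacency on $X$ must really be exploited, e.g.\ by constructions that are constant near block boundaries and sweep the target set in the interior of a sufficiently fine block). So your outline reproduces the natural strategy of composing generators at a common refinement, but both the subdivision bookkeeping and the crucial boundary-matching construction are asserted rather than proved, and the specific recipes you give for them do not work.
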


This enables us to examine cases for which we can obtain a stronger result than Theorem~\ref{retAndExtendCont}.

\begin{thm}
Let $A \subset X \subset (Z^m, 3^m-1)$. Then there is a multivalued continuous retraction $R: X \multimap A$ if and only if for
each continuous multivalued function $F: (A,3^m-1) \multimap (Y,c_u)$, there is a continuous multivalued function $F': X \multimap Y$ that
extends $F$.
\end{thm}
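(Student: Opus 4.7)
The plan is to mimic the proof of Theorem~\ref{retAndExtendCont}, replacing the single-valued map $f: A \to Y$ with a continuous multivalued map $F: (A, 3^m-1) \multimap (Y, c_u)$, and using Theorem~\ref{whenCompositionCont} in place of ordinary composition of single-valued continuous maps. The hypothesis $X \subset (\Z^m, 3^m - 1)$ is chosen precisely to meet the requirement on the outer domain in Theorem~\ref{whenCompositionCont}.

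For the forward direction, I would assume a continuous multivalued retraction $R: X \multimap A$ exists. Since $A \subset X \subset \Z^m$ inherits the $3^m-1$ adjacency, $R$ can be regarded as a continuous multivalued function $(X, 3^m-1) \multimap (A, 3^m-1)$. Given any continuous $F: (A, 3^m-1) \multimap (Y, c_u)$, form the composition $F \circ R: X \multimap Y$. Applying Theorem~\ref{whenCompositionCont} with the intermediate space $A \subset (\Z^m, 3^m-1)$ (i.e., with $n = m$ and $u = m$ in the statement of that theorem) gives that $F \circ R$ is a continuous multivalued function from $(X, 3^m-1)$ to $(Y, c_u)$. The extension property is immediate from the retraction condition: for each $a \in A$, $R(a) = \{a\}$, so $(F \circ R)(a) = F(R(a)) = F(a)$.

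For the backward direction, I would apply the extension hypothesis to the particular map $1_A: A \to A$, viewed as a continuous multivalued function $(A, 3^m-1) \multimap (A, 3^m-1)$ (generated, for example, by the single-valued identity $1_A: S(A,1) = A \to A$). The hypothesis produces a continuous multivalued extension $R: X \multimap A$, and the extension condition forces $R(a) = 1_A(a) = \{a\}$ for every $a \in A$. Hence $R$ is a continuous multivalued retraction.

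I do not expect a real obstacle: the theorem is the natural multivalued upgrade of Theorem~\ref{retAndExtendCont}, and the only subtlety is making sure the adjacencies line up so that Theorem~\ref{whenCompositionCont} is applicable. The choice $X \subset (\Z^m, 3^m-1)$, together with the fact that $A$ inherits the same adjacency, handles this cleanly.
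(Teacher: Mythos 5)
Your proposal is correct and matches the paper's intended argument: the paper itself proves this result by repeating the proof of Theorem~\ref{retAndExtendCont}, invoking Theorem~\ref{whenCompositionCont} to conclude that $F \circ R$ is a continuous multivalued function, and taking the identity on $A$ for the converse, exactly as you do.
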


\begin{proof}
This assertion follows from an argument like that of the proof of Theorem~\ref{retAndExtendCont}, using for one of the
implications that by Theorem~\ref{whenCompositionCont},
$F \circ R$ is a continuous multivalued function.
\end{proof}

We say a multivalued function
$f: X \multimap Y$ is a {\em surjection} 
if for every $y \in Y$ there exists
$x \in X$ such that $y \in f(x)$.

\begin{prop}
\label{surj-prop}
Let $(X,\kappa)$ and $(Y,\lambda)$ be
nonempty digital images. Then there is a
$(\kappa,\lambda)$-strongly continuous multivalued function
$F: X \multimap Y$ that is a surjection.
Further, if $Y$ is connected then there
exists such a multivalued function $F$ that
is also connectivity preserving.
\end{prop}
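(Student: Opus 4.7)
The plan is to exhibit the simplest possible witness, namely the constant multivalued function $F\colon X\multimap Y$ defined by $F(x)=Y$ for all $x\in X$, and check that it does the job in both parts.

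First I would verify surjectivity. Since $X$ is nonempty, pick any $x_0\in X$; then for every $y\in Y$ we have $y\in Y=F(x_0)$, so $F$ is a surjection by the definition given just before the proposition.

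Next I would verify $(\kappa,\lambda)$-strong continuity. Let $x\adj_{\kappa} x'$ in $X$. Then $F(x)=F(x')=Y$, so for every $w\in F(x)$ we have $w\in F(x')$ with $w\adjeq_{\lambda} w$ (equality suffices for the $\adjeq$ relation), and symmetrically for every $w'\in F(x')$. This satisfies both clauses of the strong continuity definition, so $F$ is strongly continuous. (The condition is vacuous if $X$ has no $\kappa$-adjacent pairs.)

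For the second assertion, assume $Y$ is connected. The constant multivalued function $F(x)=Y$ is already shown above to be strongly continuous and a surjection, and by Proposition~\ref{1-to-all} it is connectivity preserving. Thus the same $F$ witnesses both claims simultaneously. There is no real obstacle here; the only thing to be careful about is that strong continuity in Definition~\ref{Tsaur-def} is phrased with $\adjeq$ rather than $\adj$, so the constant function qualifies despite the fact that a point of $Y$ need not be adjacent to itself.
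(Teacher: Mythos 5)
Your proposal is correct and matches the paper's own argument: the paper also takes the constant multivalued function $F(x)=Y$, checks strong continuity via the fact that each point of $F(x)$ equals a point of $F(x')$, and notes connectivity preservation when $Y$ is connected (which is exactly Proposition~\ref{1-to-all}). Your explicit verification of surjectivity and the remark about $\adjeq$ allowing equality are fine elaborations of the same proof.
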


\begin{proof}
For all $x \in X$, let $F(x)=Y$. Let
$x \adj_{\kappa} x'$ in $X$. Then for
any $y \in F(x)$ we also have $y \in F(x')$, and for
any $y' \in F(x')$ we also have $y' \in F(x)$, so $F$ is strongly continuous.

Clearly, if $Y$ is connected then $F$ is
connectivity preserving.
\end{proof}

\begin{cor}
Let $(X,\kappa)$ and $(Y,\lambda)$ be
nonempty digital images such that the number of $\kappa$-components of $X$ is greater than or equal to
the number of $\lambda$-components of $Y$. Then there is a
$(\kappa,\lambda)$-strongly continuous multivalued function
$F: X \multimap Y$ that is a connectivity preserving surjection.
\end{cor}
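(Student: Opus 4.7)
The plan is to reduce the problem to Proposition~\ref{surj-prop} applied componentwise. First, I would write the $\kappa$-component decomposition $X=\bigsqcup_{i\in I} X_i$ and the $\lambda$-component decomposition $Y=\bigsqcup_{j\in J} Y_j$. Since $|I|\ge |J|$ and both index sets are nonempty, standard set theory supplies a surjection $\phi: I \to J$.

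Next, for each $i\in I$ I would define $F$ on $X_i$ by $F(x)=Y_{\phi(i)}$ for all $x\in X_i$, and then take the union of these restrictions to get the multivalued function $F: X \multimap Y$. Since $\phi$ is surjective and $F(X_i)=Y_{\phi(i)}$, the set $F(X)$ equals $\bigcup_j Y_j = Y$, so $F$ is a surjection.

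For strong continuity, I would observe that if $x\adj_\kappa x'$, then $x$ and $x'$ necessarily lie in a common $\kappa$-component $X_i$, so $F(x)=F(x')=Y_{\phi(i)}$. The strong continuity condition at such a pair then holds trivially (each point of $F(x)$ equals a point of $F(x')$). This is exactly the content of the first part of Proposition~\ref{surj-prop} applied on each component $X_i$ with target $Y_{\phi(i)}$; what remains is just to note that there are no new adjacencies crossing components of $X$.

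For connectivity preservation, I would invoke Theorem~\ref{conn-preserving-char}: strong continuity already gives weak continuity, and each point-image $F(x)=Y_{\phi(i)}$ is a $\lambda$-component of $Y$, hence connected. Alternatively, any $\kappa$-connected $A\subset X$ lies in a single component $X_i$, so $F(A)=Y_{\phi(i)}$ is connected. No step looks subtle; the only thing to be careful about is that the components of $X$ are pairwise $\kappa$-nonadjacent (so that defining $F$ independently on each component cannot break strong continuity at a cross-component adjacency), which is immediate from the definition of component.
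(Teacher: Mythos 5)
Your proposal is correct and is essentially the paper's own argument: both decompose $X$ and $Y$ into components, choose a surjection between the index sets, and apply Proposition~\ref{surj-prop} (whose construction is exactly $F(x)=Y_{\phi(i)}$ on each component, the target component being connected) componentwise, noting that adjacent points of $X$ lie in a common component so no cross-component adjacencies can spoil strong continuity or connectivity preservation.
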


\begin{proof}
Let $\{X_u\}_{u \in U}$ be the set of distinct $\kappa$-components of $X$.
Let $\{Y_v\}_{v \in V}$ be the set of distinct $\lambda$-components of $Y$.
Since $|U| \ge |V|$, there is a surjection $s: U \to V$. By Proposition~\ref{surj-prop}, there is a
$(\kappa,\lambda)$-strongly continuous, connectivity preserving surjective multivalued function
$F_u: X_u \multimap Y_{s(u)}$ for every $u \in U$. Then the multivalued function
$F: X \multimap Y$ defined by $F(x)=F_u(x)$ for $x \in X_u$ is easily seen to be a
strongly continuous connectivity preserving surjection.
\end{proof}

\begin{lem}
\label{nbr-nearness-relation}
Let $A \subset (X,\kappa)$, $A \neq \emptyset$, where $X$ is connected. For all
$x \in X$, let $l_X^{\kappa}(x,A)$ be the length
of a shortest $\kappa$-path in $X$ from $x$ to any
point of $A$. For all $x,x' \in X$, if
$x \adj_{\kappa} x'$ then
$|l_X^{\kappa}(x,A) - l_X^{\kappa}(x',A)| \le 1$.
\end{lem}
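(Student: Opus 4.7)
The plan is to use the standard triangle-inequality-style argument for shortest path distances: extend a shortest path from one endpoint to $A$ by prepending the adjacent vertex, which produces a candidate path of length one more than the original, and then invoke symmetry.

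In detail, I would first set $d = l_X^{\kappa}(x,A)$ and $d' = l_X^{\kappa}(x',A)$. Since $X$ is connected and $A \neq \emptyset$, both quantities are finite nonnegative integers, so the definitions make sense. By definition of $d$, choose a $\kappa$-path $x = y_0, y_1, \ldots, y_d = a$ in $X$ with $a \in A$. Because $x' \adj_{\kappa} x = y_0$, the sequence $x', y_0, y_1, \ldots, y_d$ is a $\kappa$-path in $X$ from $x'$ to a point of $A$ of length $d+1$, which gives $d' \le d+1$. Swapping the roles of $x$ and $x'$ and using that $\adj_{\kappa}$ is symmetric, the same argument yields $d \le d'+1$. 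Combining these two inequalities produces $|d - d'| \le 1$, which is exactly the claim.

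There is essentially no obstacle here; the only mild care needed is the observation that a path from $x'$ to a point of $A$ really is obtained by prepending $x'$ to a path starting at $x$, which uses that $x \adj_{\kappa} x'$ is a single step of a $\kappa$-path. I would therefore keep the proof short and not dwell on routine verification.
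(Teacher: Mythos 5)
Your proof is correct and follows essentially the same argument as the paper: prepend the adjacent point to a shortest path from $x$ to $A$ to get $l_X^{\kappa}(x',A) \le l_X^{\kappa}(x,A)+1$, then conclude by symmetry, noting finiteness from the connectedness of $X$.
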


\begin{proof}
Note the assumption that $X$ is connected
guarantees that for all $x \in X$,
$l_X^{\kappa}(x,A)$ is finite.

Let $\{x_i\}_{i=0}^m$ be a path in $X$
from $x = x_0$ to $x_m \in A$, where
$m=l_X^{\kappa}(x,A)$. Then $\{x'\}\cup \{x_i\}_{i=0}^m$ is a path of length
$m+1$ from $x'$ to $x_m \in A$. Therefore,
$l_X^{\kappa}(x',A) \le m+1 = l_X^{\kappa}(x,A)+1$. 
Similarly, $l_X^{\kappa}(x,A) \le l_X^{\kappa}(x',A)+1$.
The assertion follows.
\end{proof}

In the following, for $x \in X$, $A \subset X$, let $L_A^{\kappa}(x,X)$
be the set such that $a \in L_A^{\kappa}(x,X)$ implies $a \in A$ and there is
a path in $X$ from $x$ to $a$ of length in
$\{l_X^{\kappa}(x,A), l_X^{\kappa}(x,A)+1\}$.

\begin{thm}
\label{constructed-weak}
Let $(X,\kappa)$ be connected.
Suppose $\emptyset \neq A \subset X$. Then the multivalued function
$f: X \multimap A$ defined by
\[ f(x)= \left \{ \begin{array}{ll}
   \{x\} & \mbox{if } x \in A; \\
   L_A^{\kappa}(x,X) & \mbox{if } x \in X \setminus A,
\end{array} \right .
\]
is a weakly continuous retraction.
\end{thm}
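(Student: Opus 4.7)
The plan is to verify the two required properties in turn. First, that $f$ is a retraction follows immediately from the definition: for $a \in A$ we have $f(a) = \{a\}$ by the first clause.

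For weak continuity, I need to show that whenever $x \adj_{\kappa} x'$ in $X$, the sets $f(x)$ and $f(x')$ are $\kappa$-adjacent. I would split into cases on whether each endpoint lies in $A$. The cases $x,x' \in A$ and $x \in A$, $x' \notin A$ (and its symmetric) are quick: in the former, $f(x)=\{x\}$ and $f(x')=\{x'\}$ are adjacent because $x \adj_\kappa x'$; in the latter, we have $l_X^\kappa(x',A) = 1$ (since $x' \notin A$ but is $\kappa$-adjacent to $x \in A$), so $x \in L_A^\kappa(x',X) = f(x')$, hence $f(x)$ and $f(x')$ share the point $x$.

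The main case is $x, x' \in X \setminus A$. Write $d = l_X^\kappa(x,A)$ and $d' = l_X^\kappa(x',A)$. By Lemma~\ref{nbr-nearness-relation}, $|d - d'| \le 1$. Without loss of generality assume $d \le d'$, so $d' \in \{d,d+1\}$. Pick a shortest $\kappa$-path $x = x_0, x_1, \ldots, x_d = a$ from $x$ to a point $a \in A$; then $a \in L_A^\kappa(x,X) = f(x)$. Prepending $x'$ gives a $\kappa$-path from $x'$ to $a$ of length $d+1$, which is either $d'$ (if $d' = d+1$) or $d'+1$ (if $d' = d$), so in both subcases $a \in L_A^\kappa(x',X) = f(x')$. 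Thus $a \in f(x) \cap f(x')$, which makes the sets adjacent. The case $d > d'$ is symmetric.

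I do not foresee a genuine obstacle: Lemma~\ref{nbr-nearness-relation} provides exactly the control on distances needed, and the definition of $L_A^\kappa$ as paths of length in $\{l_X^\kappa(x,A), l_X^\kappa(x,A)+1\}$ is tuned precisely so that prepending or appending one edge keeps a witnessing path within the allowed length range. The only thing to be careful about is bookkeeping the two possibilities $d' = d$ and $d' = d+1$ separately when verifying that the constructed path stays within the allowed length window for $L_A^\kappa(x',X)$.
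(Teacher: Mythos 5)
Your proof is correct and follows essentially the same route as the paper: the same case split on membership in $A$, with Lemma~\ref{nbr-nearness-relation} supplying the key inequality in the case $x,x'\in X\setminus A$, and the retraction property read off from the definition. The one substantive difference is how the witnessing point $a$ is produced in that main case. The paper takes a shortest path $P$ from the \emph{farther} point $x'$ (under the normalization $l_X^{\kappa}(x,A)\le l_X^{\kappa}(x',A)$) and prepends the nearer point $x$, asserting the resulting path from $x$ has length at most $l_X^{\kappa}(x,A)+1$; as written, that path actually has length $l_X^{\kappa}(x',A)+1$, which equals $l_X^{\kappa}(x,A)+2$ when the two distances differ by one, so the paper's stated bound needs the distances to be equal. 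You instead take a shortest path from the \emph{nearer} point $x$ and prepend $x'$: then $a\in f(x)$ trivially (the path has length $d=l_X^{\kappa}(x,A)$), and $a\in f(x')$ because the extended path has length $d+1\in\{d',d'+1\}$ where $d'=l_X^{\kappa}(x',A)$. Your choice of direction is exactly what the definition of $L_A^{\kappa}$ is tuned for, and it handles the subcase $d'=d+1$ cleanly where the paper's write-up is loose; so your argument is, if anything, the tighter version of the same proof.
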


\begin{proof}
Let $x \adj_{\kappa} x'$ in $X$. We must consider the following cases.
\begin{itemize}
\item $x,x' \in A$. Then $f(x)=\{x\}$ and 
      $f(x')=\{x'\}$ are clearly adjacent sets.
\item $x \in A$, $x' \in X \setminus A$. 
      Then $l_X^{\kappa}(x',A)=1$ and
      $x \in f(x) \cap f(x')$, so
      $f(x)$ and $f(x')$ are $\kappa$-adjacent sets.
\item $x' \in A$, $x \in X \setminus A$. This is similar to the previous case.
\item $x, x' \in X \setminus A$. Without
      loss of generality,
      \begin{equation}
      \label{nbr-ineq}
      l_X^{\kappa}(x,A) \le l_X^{\kappa}(x',A).
      \end{equation}
      
      Let $a \in A$ be such that there is a $\kappa$-path $P=\{x_i\}_{i=0}^m$ in
      $X$ from $x'=x_0$ to $x_m=a$, such that $m=l_X^{\kappa}(x',A)$.
      By Lemma~\ref{nbr-nearness-relation} and statement~(\ref{nbr-ineq}), $\{x\} \cup P$ is a path in $X$ from $x$ to $a$
      of length at most $L_X^{\kappa}(x,A) + 1$. Therefore, $a \in f(x) \cap f(x')$.
     \end{itemize}
In all cases, $f(x)$ and $f(x')$ are adjacent sets, so $f$ has weak continuity. 

Clearly, $f$ is a retraction.
\end{proof}

The function constructed for
Theorem~\ref{constructed-weak} does not
generally have strong continuity, as
shown in the following example.

\begin{exl}
Let $X=[0,4]_{\Z}$. Let $A=\{0,4\}$. Then
the multivalued function
$f: (X,c_1) \multimap (A,c_1)$ of
Theorem~\ref{constructed-weak}
does not have strong continuity.
\end{exl}

\begin{proof}
This follows from the observations that
$f(1)=\{0\}$, $f(2)=\{0,4\}$, $1 \adj_{c_1} 2$, and $4 \in f(2)$ is not
$c_1$-adjacent to any member of $f(1)$.
\end{proof}

For $A \subset X$, let
$Bd_X^{\kappa}(A) = \{a \in A \, | \, N_X^{\kappa}(a) \setminus A \neq \emptyset \}$.

As an alternative to the multivalued function 
$f$ of Theorem~\ref{constructed-weak}, we can
consider the following.

\begin{thm}
\label{to-bd}
Suppose $\emptyset \neq A \subset (X,\kappa)$. Then the multivalued function
$g: X \multimap A$ defined by
\[ g(x)= \left \{ \begin{array}{ll}
   \{x\} & \mbox{if } x \in A; \\
   Bd_X^{\kappa}(A) & \mbox{if } x \in X \setminus A,
\end{array} \right .
\]
is a weakly continuous retraction. Further, if $Bd_X^{\kappa}(A)$ is connected, then
$g$ is connectivity preserving.
\end{thm}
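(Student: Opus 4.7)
The plan is to verify directly that $g$ satisfies weak continuity via case analysis on the location of the two adjacent points $x \adj_{\kappa} x'$ relative to $A$, then invoke Theorem~\ref{conn-preserving-char} for the connectivity preserving conclusion.

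First I would note that $g$ is obviously a retraction since $g(a) = \{a\}$ for every $a \in A$. For weak continuity, I would split into four cases. If both $x, x' \in A$, then $g(x) = \{x\}$ and $g(x') = \{x'\}$ are adjacent singletons because $x \adj_{\kappa} x'$. If $x \in A$ and $x' \in X \setminus A$ (the symmetric case is identical), I would observe that $x' \in N_X^{\kappa}(x) \setminus A$ witnesses $x \in Bd_X^{\kappa}(A)$, so $x \in g(x) \cap g(x')$ and the two sets are adjacent. If both $x, x' \in X \setminus A$, then $g(x) = g(x') = Bd_X^{\kappa}(A)$, so adjacency is automatic (provided of course that this common set is nonempty, which is needed for $g$ to be a well-defined multivalued function in the first place). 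This exhausts the cases and establishes weak continuity.

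For the second assertion, I would apply Theorem~\ref{conn-preserving-char}, which characterizes connectivity preservation as the conjunction of weak continuity and connectedness of all point images. Weak continuity has just been shown. For connectedness of the point images: if $x \in A$, then $g(x) = \{x\}$ is trivially connected; if $x \in X \setminus A$, then $g(x) = Bd_X^{\kappa}(A)$ is connected by hypothesis. Therefore $g$ is connectivity preserving.

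There is no real obstacle in this proof — the main subtlety is simply keeping track of why $x$ itself lies in $Bd_X^{\kappa}(A)$ in the mixed case, which follows immediately from the definition once one writes out that $x'$ is a point of $N_X^{\kappa}(x)$ outside $A$. The rest is bookkeeping plus an appeal to Theorem~\ref{conn-preserving-char}.
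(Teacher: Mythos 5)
Your proof is correct and follows essentially the same route as the paper's: the same four-case adjacency analysis for weak continuity (with the mixed case resolved by noting $x \in Bd_X^{\kappa}(A)$, hence $x \in g(x) \cap g(x')$) and the same appeal to Theorem~\ref{conn-preserving-char} for connectivity preservation. Your parenthetical remark that $Bd_X^{\kappa}(A)$ must be nonempty for $g$ to be well defined when $X \setminus A \neq \emptyset$ is a reasonable observation that the paper leaves implicit, but otherwise the two arguments coincide.
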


\begin{proof}
Clearly $g$ is a retraction. To show $g$ is weakly continuous, suppose $x \adj_{\kappa} x'$ in $X$.
We consider the following cases.
\begin{itemize}
\item $x, x' \in A$. Then $x \in g(x)$ and
      $x' \in g(x')$, so $g(x)$ and $g(x')$ are
      $\kappa$-adjacent sets.
\item $x \in A$, $x' \in X \setminus A$. Then
      $x \in Bd_X^{\kappa}(A)$, so
      $x \in g(x) \cap g(x')$. Thus, $g(x)$ and $g(x')$ are
      $\kappa$-adjacent sets.
\item $x' \in A$, $x \in X \setminus A$. This is similar to the previous case.
\item $x,x' \in X \setminus A$. Then $g(x)=g(x')$.
\end{itemize}
Thus, in all cases, $g(x)$ and $g(x')$ are adjacent sets. Therefore, $g$ is weakly continuous.

Suppose $Bd_X^{\kappa}(A)$ is connected. Then for every $x \in X$, $g(x)$ is connected. It
follows from Theorem~\ref{conn-preserving-char} that $g$ is connectivity preserving.
\end{proof}

\begin{exl} Let $X = [0,4]_{\Z} \subset (\Z,c_1)$. Let $A=[1,3]_{\Z} \subset (\Z,c_1)$.
Then the multivalued function $g$ of Theorem~\ref{to-bd} is not strongly continuous.
\end{exl}

\begin{proof}
We have $g(1)=\{1\}$ and $g(0)=\{1,3\}$.
Thus, $0 \adj_{c_1} 1$ but $3 \in g(0)$ has no $c_1$-neighbor in $g(1)$.
\end{proof}

\begin{thm}
\label{retractAndExtend}
\rm{~\cite{Boxer94}} $X_0$ is a (single-valued) retract of $(X,\kappa)$ if and only if
for every digital image $(Y,\lambda)$ and every $(\kappa,\lambda)$-continuous
single-valued function $f: X_0 \to Y$, there exists a $(\kappa,\lambda)$-continuous
extension $f': X \to Y$. $\Box$
\end{thm}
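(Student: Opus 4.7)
The plan is to mirror the strategy used in the proof of Theorem~\ref{retAndExtendCont}, specialized to the single-valued setting. The key fact that makes this easier than the multivalued case discussed in Section~\ref{compSection} is that the composition of single-valued $(\kappa,\lambda)$-continuous functions between digital images is itself continuous. This is immediate from the adjacency characterization of single-valued continuity: if $x \adj_{\kappa} x'$, then a continuous $r: X \to X_0$ gives $r(x) \adjeq_{\kappa} r(x')$, and applying a continuous $f: X_0 \to Y$ then gives $f(r(x)) \adjeq_{\lambda} f(r(x'))$.

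For the forward direction, I would assume $X_0$ is a retract of $(X,\kappa)$, i.e.\ there is a $(\kappa,\kappa)$-continuous $r: X \to X_0$ with $r(a)=a$ for every $a \in X_0$. Given any digital image $(Y,\lambda)$ and any $(\kappa,\lambda)$-continuous $f: X_0 \to Y$, I would define $f' = f \circ r: X \to Y$. By the composition fact above, $f'$ is $(\kappa,\lambda)$-continuous, and for $a \in X_0$ we have $f'(a) = f(r(a)) = f(a)$, so $f'$ is the required extension.

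For the reverse direction, I would instantiate the hypothesis in the special case $(Y,\lambda) = (X_0,\kappa)$ and $f = \id_{X_0}: X_0 \to X_0$, noting that the identity is trivially $(\kappa,\kappa)$-continuous. The hypothesized extension is then a $(\kappa,\kappa)$-continuous map $f': X \to X_0$ with $f'(a) = a$ for all $a \in X_0$, which is exactly a retraction of $X$ onto $X_0$.

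No step here is really an obstacle: the argument is the single-valued analogue of Theorem~\ref{retAndExtendCont}, with the subdivision-based composition step (which itself appeals to the continuity of $f \circ R'$ on $S(X,r)$) replaced by the simpler and well-known fact that composition of single-valued continuous maps between digital images is continuous, so one does not have to worry about the pathologies recorded in~(\ref{composition-continuity}).
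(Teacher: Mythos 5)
Your proof is correct and is essentially the argument the paper has in mind: Theorem~\ref{retractAndExtend} is quoted from~\cite{Boxer94} without proof, but the remark immediately following it points to exactly your key step (composition of continuous single-valued functions is continuous), and your two directions mirror the proof of Theorem~\ref{retAndExtendCont} with $f\circ r$ for the extension and $\id_{X_0}$ for the retraction. Nothing further is needed.
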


The proof of this assertion depends on the fact that the composition of continuous
single-valued functions is continuous. Since statement~(\ref{composition-continuity}) implies that a similar argument does not
work for multivalued functions, it is not known whether an analog of Theorem~\ref{retractAndExtend}
for multivalued functions is correct.

\begin{thm}
\label{weak-extension}
Let $X_0 \subset (X,\kappa)$ and let $F: X_0 \multimap (Y,\lambda)$ be a
weakly continuous multivalued function. Then there is an extension
$F': X \multimap Y$ that is $(\kappa,\lambda)$-weakly continuous. Further, if
$F$ is connectivity preserving and $F(X_0)$ is $\lambda$-connected, then $F'$ can be 
taken to be connectivity preserving.
\end{thm}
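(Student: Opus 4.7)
The plan is to construct $F'$ by extending $F$ in the most parsimonious way possible: simply send every point outside $X_0$ to the union $F(X_0) = \bigcup_{x \in X_0} F(x)$. More precisely, I would define
\[ F'(x) = \begin{cases} F(x) & \text{if } x \in X_0, \\ F(X_0) & \text{if } x \in X \setminus X_0. \end{cases} \]
This is clearly an extension of $F$. The underlying intuition is that weak continuity only requires that adjacent point-images share \emph{some} adjacency; making the images outside $X_0$ as large as possible makes this easy.

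To verify weak continuity, let $x \adj_\kappa x'$ in $X$ and consider three cases. If $x, x' \in X_0$, then $F'(x) = F(x)$ and $F'(x') = F(x')$ are $\lambda$-adjacent by weak continuity of $F$. If $x \in X_0$ and $x' \in X \setminus X_0$ (the symmetric case is identical), then any $y \in F(x)$ lies in both $F'(x) = F(x)$ and $F'(x') = F(X_0)$, so the two sets share a point and are therefore adjacent. If $x, x' \in X \setminus X_0$, then $F'(x) = F'(x') = F(X_0)$, and any common point witnesses adjacency. Thus $F'$ is weakly continuous.

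For the second statement, suppose $F$ is connectivity preserving and $F(X_0)$ is $\lambda$-connected. By Theorem~\ref{conn-preserving-char}, it suffices to show $F'$ is weakly continuous (done) and that $F'(x)$ is $\lambda$-connected for every $x \in X$. For $x \in X_0$, $F'(x) = F(x)$ is connected because $F$ is connectivity preserving and $\{x\}$ is connected. For $x \in X \setminus X_0$, $F'(x) = F(X_0)$ is connected by hypothesis. So $F'$ is connectivity preserving.

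There is no real technical obstacle here; the main thing to guard against is a subtle case-oversight, in particular making sure that the definition of adjacent \emph{sets} (which the paper takes to include overlap via the reflexive $\adjeq$) is used correctly when the image sets share points rather than being related by a nontrivial edge. A minor point worth flagging is the implicit assumption that $F(x) \neq \emptyset$ for each $x \in X_0$, so that $F(X_0)$ is nonempty and the intersections invoked above are witnessed by actual points.
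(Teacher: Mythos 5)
Your construction of $F'$ is exactly the one in the paper, and your weak-continuity case analysis matches the paper's proof; the only (minor) difference is that you verify connectivity preservation via Theorem~\ref{conn-preserving-char} (weak continuity plus connected point-images), whereas the paper checks the definition directly on connected subsets $A \subset X$, both of which are valid. Your remark about implicitly assuming nonempty point-images is a reasonable observation but does not affect the substance; the argument is correct and essentially the same as the paper's.
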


\begin{proof}
Let $F'$ be defined by
\[ F'(x) = \left \{ \begin{array}{ll}
   F(x) & \mbox{if } x \in X_0; \\
   F(X_0) & \mbox{if } x \in X \setminus X_0.
\end{array}
\right .
\]

Let $x \adj_{\kappa} x'$. If $\{x,x'\} \subset X_0$, then $F'(x)=F(x)$ and $F(x')=F'(x')$ are
$\lambda$-adjacent subsets of $Y$. If $\{x,x'\} \subset X \setminus X_0$, then $F'(x)=F(X_0)=F'(x')$.
Otherwise, without loss of generality we have $x \in X_0$ and $x' \in X \setminus X_0$. Then
$F'(x) = F(x) \subset F(X_0) = F'(x')$. Thus, in all cases, $F'(x)$ and $F'(x')$ are $\lambda$-adjacent subsets
of $Y$. Therefore, $F'$ is weakly continuous.

Suppose $F$ is connectivity preserving and $F(X_0)$ is $\lambda$-connected.
Let $A$ be a $\kappa$-connected subset of $X$. If $A \subset X_0$, then $F'(A)=F(A)$ is connected.
Otherwise, $F'(A)=F(X_0)$ is $\lambda$-connected. Thus, $F'$ is connectivity preserving.
\end{proof}

\begin{cor}
Let $X_0 \subset (X,\kappa)$. Then $X_0$ is a weakly continuous multivalued retract of $X$.
\end{cor}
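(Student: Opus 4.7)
The plan is to derive this corollary directly from Theorem~\ref{weak-extension} by taking $F$ to be the multivalued identity on $X_0$. Specifically, I would define $F: X_0 \multimap X_0$ by $F(a) = \{a\}$ for every $a \in X_0$, view $X_0$ itself as the target image (with the ambient adjacency $\kappa$), and then apply the extension theorem.

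First I would check that this $F$ is $(\kappa,\kappa)$-weakly continuous. If $a \adj_{\kappa} a'$ in $X_0$, then $F(a) = \{a\}$ and $F(a') = \{a'\}$, and these singletons are $\kappa$-adjacent subsets of $X_0$ by the very definition of adjacency of sets given in Section~\ref{prelims} (a single point of one set is adjacent to a single point of the other). So $F$ satisfies the hypothesis of Theorem~\ref{weak-extension}.

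Next I would invoke Theorem~\ref{weak-extension} to obtain a weakly continuous multivalued extension $F': X \multimap X_0$ of $F$. The key observation is that being an \emph{extension} of $F$ means $F'(a) = F(a) = \{a\}$ for every $a \in X_0$. This is exactly the condition for $F'$ to be a retraction in the sense defined in Section~\ref{retAndExt}. Since $F'$ is weakly continuous and is a retraction onto $X_0$, this shows $X_0$ is a weakly continuous multivalued retract of $X$.

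There is really no obstacle here; the corollary is essentially a direct instantiation of Theorem~\ref{weak-extension} with the multivalued identity. The only thing to be mindful of is making sure the reader sees that the extension produced by Theorem~\ref{weak-extension} automatically fixes the points of $X_0$ as required by the definition of retraction, which is immediate from the explicit formula for $F'$ used in the proof of that theorem.
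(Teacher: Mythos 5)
Your proposal is correct and is exactly the paper's argument: the paper's proof also applies Theorem~\ref{weak-extension} to the identity function on $X_0$, noting it is weakly continuous, and observes that the resulting extension fixes each point of $X_0$ and hence is a retraction. You have merely spelled out the verification of weak continuity of the identity and the retraction condition in more detail.
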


\begin{proof} Since the identity function on $X_0$ is a weakly continuous multivalued function,
the assertion follows from Theorem~\ref{weak-extension}.
\end{proof}

\begin{thm}
\label{cp-preserving-ret}
Let $\emptyset \ne X_0 \subset X$ and let
$F: X_0 \multimap Y$ be a $(\kappa,\lambda)$-connectivity preserving multivalued
function.
If
\begin{itemize}
\item $Y$ is connected, or
\item $F(X_0)$ is connected, or
\item $F(Bd_X^{\kappa}(X_0))$ is connected,
\end{itemize}
then there is a $(\kappa,\lambda)$-connectivity preserving extension $F': X \multimap Y$ of $F$.
\end{thm}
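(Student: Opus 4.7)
The plan is to give a uniform construction that covers all three cases simultaneously. In each case I will choose a connected subset $C \subseteq Y$ and define
\[ F'(x) = \begin{cases} F(x) & \text{if } x \in X_0, \\ C & \text{if } x \in X \setminus X_0, \end{cases} \]
selecting $C = Y$, $C = F(X_0)$, or $C = F(Bd_X^{\kappa}(X_0))$ according to which hypothesis holds. To show that $F'$ is connectivity preserving I will invoke Theorem~\ref{mildadj}, so I only need to verify (a) every point-image $F'(x)$ is $\lambda$-connected, and (b) whenever $x \adj_\kappa x'$ in $X$, the sets $F'(x)$ and $F'(x')$ are $\lambda$-adjacent.

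For (a), if $x \in X_0$ then $F'(x)=F(x)$ is connected because $F$ is connectivity preserving (applied to the connected set $\{x\}$, using Theorem~\ref{mildadj}); if $x \notin X_0$ then $F'(x)=C$, which is connected by the chosen hypothesis.

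For (b), the verification splits into four subcases depending on whether each of $x,x'$ lies in $X_0$. When both lie in $X_0$, adjacency of $F(x)$ and $F(x')$ is given directly by connectivity preservation of $F$ via Theorem~\ref{mildadj}. When neither lies in $X_0$, $F'(x)=C=F'(x')$, so the sets trivially meet. When exactly one point, say $x$, lies in $X_0$ and $x' \in X \setminus X_0$, I need $F(x)$ to meet or be $\lambda$-adjacent to $C$; this is immediate when $C=Y$ or $C=F(X_0)$ since in both cases $F(x) \subseteq C$.

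The only subtle point — and the reason the third hypothesis is formulated using the boundary — arises when $C = F(Bd_X^{\kappa}(X_0))$. Here the key observation is that because $x \in X_0$ has a neighbor $x' \in X \setminus X_0$, we have $N_X^{\kappa}(x) \setminus X_0 \ne \emptyset$, so $x \in Bd_X^{\kappa}(X_0)$ by definition. Consequently $F(x) \subseteq F(Bd_X^{\kappa}(X_0)) = C$, so again $F'(x) \cap F'(x') \ne \emptyset$. This essentially routine boundary check is the only step requiring care; the rest is bookkeeping.
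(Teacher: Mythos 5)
Your proof is correct, but it takes a genuinely different route from the paper's. The paper verifies connectivity preservation directly from the definition: for each of the three hypotheses it writes out the corresponding extension $F'$ separately, takes an arbitrary $\kappa$-connected $A \subset X$, and shows $F'(A)$ is connected; in the first two cases this is immediate (when $A \not\subset X_0$, $F'(A)$ is simply $Y$ or $F(X_0)$), but in the boundary case the paper needs a real argument: it fixes $a_0 \in A \setminus X_0$, joins each $a \in A$ to $a_0$ by a path $P_a$ in $A$, and analyzes the maximal segments of $P_a$ lying in $A \cap X_0$, noting that the endpoints of such segments belong to $Bd_X^{\kappa}(X_0)$ so that images of consecutive path points overlap, whence each $F'(P_a)$, and then $F'(A)$, is connected. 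You instead route everything through the adjacency characterization of connectivity preservation (Theorem~\ref{mildadj}, equivalently Theorem~\ref{conn-preserving-char}), which reduces the whole verification to connectedness of point-images plus adjacency of images of adjacent points; this collapses the third case to the single observation that a point of $X_0$ adjacent to a point of $X \setminus X_0$ lies in $Bd_X^{\kappa}(X_0)$, which is exactly the fact the paper extracts inside its path argument, but you use it once rather than along every path, and your construction is uniform in the choice of $C$ rather than written three times. One shared caveat, equally present in the paper's proof since it builds the same $F'$: both arguments implicitly assume point-images are nonempty --- your claim that $F'(x)=C=F'(x')$ ``trivially meet'' needs $C \neq \emptyset$, which in the third case requires $Bd_X^{\kappa}(X_0) \neq \emptyset$; this is the standing convention for multivalued functions in this setting (and is needed for Theorem~\ref{mildadj} itself), so it is not a gap relative to the paper.
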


\begin{proof}
Suppose $Y$ is connected. Let $F': X \multimap Y$ be defined by
\[ F'(x) = \left \{ \begin{array}{ll}
   F(x) & \mbox{if } x \in X_0; \\
   Y & \mbox{if } x \in X \setminus X_0.
\end{array}
\right .
\]
Let $A$ be a $\kappa$-connected subset of $X$. If $A \subset X_0$, then
$F'(A)=F(A)$ is connected. Otherwise, $F'(A)=Y$ is connected. Thus, $F'$ is
connectivity preserving.

Suppose $F(X_0)$ is connected. Let $F': X \multimap Y$ be defined by
\[ F'(x) = \left \{ \begin{array}{ll}
   F(x) & \mbox{if } x \in X_0; \\
   F(X_0) & \mbox{if } x \in X \setminus X_0.
\end{array}
\right .
\]
Let $A$ be a $\kappa$-connected subset of $X$. If $A \subset X_0$, then
$F'(A)=F(A)$ is connected. Otherwise, $F'(A)=F(X_0)$ is connected. Thus, $F'$ is
connectivity preserving.

Suppose $Bd_X^{\kappa}(X_0)$ is connected. Let $F': X \multimap Y$ be defined by
\[ F'(x) = \left \{ \begin{array}{ll}
   F(x) & \mbox{if } x \in X_0; \\
   F(Bd_X^{\kappa}(X_0)) & \mbox{if } x \in X \setminus X_0.
\end{array}
\right .
\]
Let $A$ be a $\kappa$-connected subset of $X$.
\begin{itemize}
\item If $A \subset X_0$, then $F'(A)=F(A)$ is connected. 
\item If $A \subset X \setminus X_0$, then $F'(A)=F(Bd_X^{\kappa}(X_0))$ is connected.
\item Otherwise, there exists $a_0 \in A \setminus X_0$ and, for all $a \in A$, a $\kappa$-path $P_a$ in $A$
from $a$ to $a_0$. Then
\begin{equation}
\label{F'(a)}
F'(A)= \bigcup_{a \in A} F'(P_a).
\end{equation}
Since $F'(a_0) \subset F'(P_a)$ for all $a \in A$, if we can show each $F'(P_a)$ is $\lambda$-connected then it will
follow from equation~(\ref{F'(a)}) that $F'(A)$ is $\lambda$-connected.

Suppose $P_a = \{x_i\}_{i=1}^m$ where $x_1=a$, $x_m=a_0$, and $x_{i+1} \adj_{\kappa} x_i$ for $1 \le i < m$. For each
maximal segment $\{x_i\}_{i=u}^v$ of $P_a$ contained in $A \cap X_0$, we have $x_v \in Bd_X^{\kappa}(X_0)$, so
\[ F'(x_v)=F(x_v) \subset F(Bd_X^{\kappa}(X_0)) = F'(x_{v+1}). \]
Thus, $F(\{x_v,x_{v+1}\})$ is $\lambda$-connected. Similarly, if $u>1$ then $F(\{x_{u-1},x_u\})$ is $\lambda$-connected.
It follows that $F(P_a)$ is connected.
\end{itemize}
Thus, $F'$ is connectivity preserving.
\end{proof}

\begin{cor}
\label{cpr}
Let $\emptyset \ne X_0 \subset (X,\kappa)$. If $X_0$ is connected or if $Bd_X^{\kappa}(X_0)$ is connected,
then $X_0$ is a connectivity preserving multivalued retract of $X$.
\end{cor}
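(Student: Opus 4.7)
The plan is to derive this corollary directly from Theorem~\ref{cp-preserving-ret} by taking $F$ to be the identity on $X_0$. Specifically, consider the multivalued function $\mathrm{id}_{X_0}: X_0 \multimap X_0$ defined by $\mathrm{id}_{X_0}(a) = \{a\}$. This is trivially $(\kappa,\kappa)$-connectivity preserving: point-images are singletons (hence connected), and the image of any $\kappa$-connected $A \subset X_0$ is just $A$ itself. Since a retraction of $X$ onto $X_0$ is precisely an extension of $\mathrm{id}_{X_0}$ to a multivalued function $X \multimap X_0$, it suffices to produce a connectivity preserving such extension.

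First I would handle the case in which $X_0$ is connected. Here $\mathrm{id}_{X_0}(X_0) = X_0$ is connected, so the second hypothesis of Theorem~\ref{cp-preserving-ret} is satisfied. That theorem then provides a connectivity preserving extension $R: X \multimap X_0$ of $\mathrm{id}_{X_0}$, and by construction $R(a) = \{a\}$ for all $a \in X_0$, so $R$ is a retraction.

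Next I would handle the case in which $Bd_X^{\kappa}(X_0)$ is connected. Here $\mathrm{id}_{X_0}(Bd_X^{\kappa}(X_0)) = Bd_X^{\kappa}(X_0)$ is connected, so the third hypothesis of Theorem~\ref{cp-preserving-ret} is satisfied, and again we obtain a connectivity preserving extension $R: X \multimap X_0$ of $\mathrm{id}_{X_0}$, which is the desired retraction.

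There is no real obstacle here; the work is entirely absorbed into the hypotheses of Theorem~\ref{cp-preserving-ret}. The only thing to check is the straightforward observation that, because $\mathrm{id}_{X_0}$ maps each set to itself, the hypothesis ``$X_0$ connected'' and the hypothesis ``$Bd_X^{\kappa}(X_0)$ connected'' translate verbatim into the second and third bullet conditions of Theorem~\ref{cp-preserving-ret} applied to $F = \mathrm{id}_{X_0}$.
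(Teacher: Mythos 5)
Your proposal is correct and is essentially the paper's own argument: apply Theorem~\ref{cp-preserving-ret} to the identity multivalued function on $X_0$, with the hypotheses ``$X_0$ connected'' and ``$Bd_X^{\kappa}(X_0)$ connected'' feeding directly into the second and third bullet conditions of that theorem. Nothing further is needed.
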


\begin{proof}
Since the identity function on $X_0$ is a connectivity preserving multivalued function, the assertion follows
from Theorem~\ref{cp-preserving-ret}.
\end{proof}

An alternate proof of the first assertion of Corollary~\ref{cpr} is given for Theorem~7.2 of~\cite{BxSt16a}.

\section{Wedges}
\label{wedgeSec}
Let $(W,\kappa)$ be a digital image such that $W = X \cup X'$, where $X \cap X' = \{x_0\}$ for some $x_0 \in Y$.
We say $W$ is the {\em wedge} of $X$ and $X'$, written $W = X \wedge X'$ or $(W, \kappa) = (X, \kappa) \wedge (X', \kappa)$.

Let $X \cap X' = \{x_0\}$, $Y \cap Y' = \{y_0\}$, $F: X \multimap Y$ and $F': X' \multimap Y'$ be
multivalued functions such that $F(x_0) = \{y_0\} = F'(x_0)$. Define
$F \wedge F': X \wedge X' \multimap Y \wedge Y'$ by
\[ (F \wedge F')(a) = \left \{ \begin{array}{ll}
    F(a) & \mbox{if } a \in X \setminus \{x_0\}; \\
    F'(a) & \mbox{if } a \in X' \setminus \{x_0\}; \\
    \{y_0\} & \mbox{if } a=x_0.    
\end{array}
\right .
\]

\begin{lem}
\label{piecesInWedge}
Let $(W,\kappa) = (X, \kappa) \wedge (X',\kappa)$, where $\{x_0\}=X \cap X'$. Let $A \subset W$. Then $A$ is $\kappa$-connected if and only 
if each of $A \cap X$ and $A \cap X'$ is $\kappa$-connected.
\end{lem}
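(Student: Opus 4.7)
The plan hinges on one structural observation about the wedge adjacency: any $\kappa$-edge in $(W,\kappa) = (X,\kappa)\wedge(X',\kappa)$ is inherited from either $(X,\kappa)$ or $(X',\kappa)$, so if $y \in X$ and $z \in X'$ satisfy $y \adj_{\kappa} z$, then either both endpoints lie in $X$ (forcing $z \in X \cap X' = \{x_0\}$) or both lie in $X'$ (forcing $y = x_0$). Equivalently, every $\kappa$-path in $W$ that moves between the two arms of the wedge must pass through $x_0$. I would prove this as a preliminary one-line remark and then deploy it in both directions.

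For the forward direction, suppose $A$ is $\kappa$-connected; by the symmetry of the roles of $X$ and $X'$ it is enough to show $A \cap X$ is connected. Given $a, b \in A \cap X$, take a $\kappa$-path $y_0, y_1, \ldots, y_m$ in $A$ from $a$ to $b$ and examine each maximal block $y_i, \ldots, y_j$ of consecutive vertices lying in $X' \setminus X$. Since $y_0 = a \in X$ and $y_m = b \in X$ force $1 \le i \le j \le m-1$, the entry edge $y_{i-1} \adj_{\kappa} y_i$ and exit edge $y_j \adj_{\kappa} y_{j+1}$ both cross between arms, so the preliminary observation pins $y_{i-1} = y_{j+1} = x_0$ (and in particular $x_0 \in A$). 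Collapsing each such excursion to a single copy of $x_0$ yields a $\kappa$-path in $A \cap X$ joining $a$ to $b$.

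For the backward direction, suppose both $A \cap X$ and $A \cap X'$ are $\kappa$-connected. If one of them is empty then $A$ lies entirely inside the other piece and is connected by assumption. Otherwise pick $a, b \in A$: if they belong to the same piece the hypothesis immediately supplies a path in $A$, so the remaining case has $a \in A \cap X$ and $b \in A \cap X'$. The structural observation forces the two nonempty connected arm-pieces to share the only possible bridge in $W$, namely $x_0$, so $x_0 \in A$; concatenating a path $a \leadsto x_0$ in $A \cap X$ with a path $x_0 \leadsto b$ in $A \cap X'$ then gives the required $\kappa$-path in $A$. I expect the subtle step to be this final cross-arm case, since it is precisely here that the wedge rigidity must be invoked to confirm $x_0 \in A$ rather than treated as a free assumption.
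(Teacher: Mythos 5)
Your forward direction is essentially the paper's own argument: you collapse each maximal excursion of a path into $X'\setminus\{x_0\}$, using the fact that the entry and exit edges of such an excursion must pass through $x_0$; the paper does the same thing via the first and last indices of the path landing in $(A\cap X')\setminus\{x_0\}$. The genuine gap is in the backward direction, at exactly the step you flagged as subtle. Your claim that the structural observation ``forces'' $x_0\in A$ once $A\cap X$ and $A\cap X'$ are both nonempty and connected is not a valid inference: the observation only constrains which adjacencies exist in $W$; it cannot produce membership of $x_0$ in $A$. Concretely, take $W=[0,2]_{\Z}$ with $c_1$-adjacency, $X=[0,1]_{\Z}$, $X'=[1,2]_{\Z}$, $x_0=1$, and $A=\{0,2\}$: then $A\cap X=\{0\}$ and $A\cap X'=\{2\}$ are each connected, but $x_0\notin A$ and $A$ is disconnected. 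So the cross-arm case cannot be closed by a better argument; it genuinely requires $x_0\in A$ (or adjacency of $A\cap X$ and $A\cap X'$) as an extra hypothesis. You should know that the paper's proof of this direction carries the same tacit assumption---it asserts a path in $A\cap X$ from $a_0$ to $x_0$, which exists only when $x_0\in A$---and that elsewhere in the paper only the ``only if'' direction of the lemma is actually invoked, so your instinct about where the delicacy lies was correct even though the justification you gave does not hold.

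A smaller point: your preliminary ``observation'' that every $\kappa$-edge of $W$ lies inside $X$ or inside $X'$ is not derivable from the paper's stated definition of wedge, which requires only $W=X\cup X'$ and $X\cap X'=\{x_0\}$; a priori a point of $X\setminus\{x_0\}$ could be adjacent in $W$ to a point of $X'\setminus\{x_0\}$, and in such configurations even the forward direction fails. So this rigidity should be stated as part of the wedge structure (or as a standing hypothesis), not proved as a one-line remark---though in fairness the paper's proof uses the same unstated assumption when it concludes that the vertex immediately preceding an excursion into $X'\setminus\{x_0\}$ is $x_0$.
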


\begin{proof}
Suppose $A$ is $\kappa$-connected. Let $a_0,a_1 \in A \cap X$. Then there is a $\kappa$-path $P=\{x_i\}_{i=1}^n$
in $A$ from $a_0$ to $a_1$. If $P$ is not a subset of $A \cap X$ then there are smallest and largest indices $u,v$ such
that $\{x_u, x_v\} \subset (A \cap X') \setminus \{x_0\}$. Then $P_0=\{x_i\}_{i=1}^{u-1}$ is a path in $A \cap X$ from
$a_0$ to $x_0$, and $P_1=\{x_i\}_{i=v+1}^n$ is a path in $A \cap X$ from $x_0$ to $a_1$. Therefore,
$P_0 \cup P_1$ is a path in $A \cap X$ from $a_0$ to $a_1$.
Since $a_0$ and $a_1$ were arbitrarily chosen, $A \cap X$ is connected. Similarly, $A \cap X'$ is connected.

Suppose each of $A \cap X$ and $A \cap X'$ is $\kappa$-connected. Let $a_0,a_1 \in A$.
If $a_0 \in A \cap X$ then there is a path $P$ in $A \cap X$ from $a_0$ to $x_0$. Similarly, if
$a_0 \in A \cap X'$ then there is a path $P$ in $A \cap X'$ from $a_0$ to $x_0$. In either case, there is a path $P$ in $A$ from
$a_0$ to $x_0$. Similarly, there is a path $P'$ in $A$ from $x_0$ to $a_1$. Therefore, $P \cup P'$ is a path in $A$ from
$a_0$ to $a_1$. Since $a_0$ and $a_1$ were arbitrarily chosen, it follows that $A$ is connected.
\end{proof}

\begin{thm}
Let $X \cap X' = \{x_0\}$, $Y \cap Y' = \{y_0\}$. Let $F: X \multimap Y$ and $F': X' \multimap Y'$ be
multivalued functions such that $\{y_0\} = F(x_0) = F'(x_0)$.
\begin{enumerate}
\item If $F$ and $F'$ are both $(\kappa, \lambda)$-continuous, then $F \wedge F': X \wedge X' \multimap Y \wedge Y'$ is
      $(\kappa, \lambda)$-continuous.
\item If $F$ and $F'$ are both $(\kappa, \lambda)$-connectivity preserving, then $F \wedge F': X \wedge X' \multimap Y \wedge Y'$ is
      $(\kappa, \lambda)$-connectivity preserving.
\item If $F$ and $F'$ are both $(\kappa, \lambda)$-weakly continuous, then $F \wedge F': X \wedge X' \multimap Y \wedge Y'$ is
      $(\kappa, \lambda)$-weakly continuous.
\item If $F$ and $F'$ are both $(\kappa, \lambda)$-strongly continuous, then $F \wedge F': X \wedge X' \multimap Y \wedge Y'$ is
      $(\kappa, \lambda)$-strongly continuous.
\end{enumerate}
\end{thm}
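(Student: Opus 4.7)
The plan is to prove each of the four parts through a case analysis on where adjacent points lie relative to the wedge decomposition, leveraging the convention (implicit in the proof of Lemma~\ref{piecesInWedge}) that the only adjacencies in $W = X \wedge X'$ crossing between $X$ and $X'$ pass through the wedge point $x_0$; the analogous convention is assumed on $Y \wedge Y'$.

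For part 2 (connectivity preservation), I would take a $\kappa$-connected $A \subset W$ and use Lemma~\ref{piecesInWedge} to split it into $\kappa$-connected pieces $A \cap X$ and $A \cap X'$. If $x_0 \notin A$, then $A$ lies entirely in one of $X \setminus \{x_0\}$ or $X' \setminus \{x_0\}$, and $(F \wedge F')(A)$ equals $F(A)$ or $F'(A)$, which is connected by hypothesis. Otherwise $x_0 \in A$, both $A \cap X$ and $A \cap X'$ contain $x_0$, and $(F \wedge F')(A) = F(A \cap X) \cup F'(A \cap X')$ is the union of two connected sets sharing the point $y_0 \in F(x_0) \cap F'(x_0)$, hence connected in $Y \wedge Y'$.

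For parts 3 and 4 (weak and strong continuity), I would take $a \adj_\kappa a'$ in $W$ and note that by the wedge convention both points lie in $X$ or both in $X'$. In the first case, the definition of $F \wedge F'$ gives $(F \wedge F')(a) = F(a)$ and $(F \wedge F')(a') = F(a')$ (using $F(x_0) = \{y_0\}$ when either point is $x_0$), so weak (resp.\ strong) adjacency of the images inside $Y$ follows from the weak (resp.\ strong) continuity of $F$; these adjacencies carry over to $Y \wedge Y'$ since the inclusion $Y \hookrightarrow Y \wedge Y'$ preserves $\lambda$-adjacency. The case with both points in $X'$ is symmetric.

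For part 1 (continuity), I would invoke Lemma~\ref{multipleSubdivision} to choose a common $r$ such that $F$ is generated by a continuous $f: S(X,r) \to Y$ and $F'$ by a continuous $f': S(X',r) \to Y'$. Because $F(x_0) = \{y_0\} = F'(x_0)$, both $f$ and $f'$ send every point of $E_r^{-1}(x_0) = S(\{x_0\},r)$ to $y_0$, so they agree on $S(X,r) \cap S(X',r)$ and glue to a well-defined single-valued map $g: S(X \wedge X', r) \to Y \wedge Y'$ that generates $F \wedge F'$. To verify continuity, I would check that any pair of $\kappa$-adjacent points in $S(X \wedge X', r)$ lies entirely on one side of the subdivided wedge, reducing to the continuity of $f$ or $f'$. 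The main obstacle is precisely this last step: in principle the subdivided image could introduce spurious cross-adjacencies between $S(X,r) \setminus S(\{x_0\},r)$ and $S(X',r) \setminus S(\{x_0\},r)$, so I would argue (consistent with the wedge convention already used throughout the section) that the adjacency on $S(X \wedge X', r)$ is the one in which crossings occur only through $S(\{x_0\},r)$, exactly where $g$ takes the common value $y_0$ and continuity is automatic.
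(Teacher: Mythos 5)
Your proposal is correct and follows essentially the same route as the paper: Lemma~\ref{multipleSubdivision} plus gluing of the generating maps for continuity, Lemma~\ref{piecesInWedge} and the union of two connected sets through $y_0$ for connectivity preservation, and the case analysis on adjacent pairs (both in $X$ or both in $X'$) for weak and strong continuity. In fact you are slightly more careful than the paper in part~1, where you explicitly address possible cross-adjacencies in $S(X \wedge X',r)$ between the two sides of the subdivided wedge, a point the paper's proof passes over silently under the same implicit wedge convention.
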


\begin{proof} We argue as follows.
\begin{enumerate}
\item Suppose $F$ and $F'$ are both $(\kappa, \lambda)$-continuous.
From Lemma~\ref{multipleSubdivision}, we conclude that for some $r \in \N$, there are continuous functions $f: S(X,r) \to Y$
and $f': S(X',r) \to Y'$ that generate $F$ and $F'$, respectively. Then the single-valued function
$\hat{f}: S(X,r) \cup S(X',r) = S(X \wedge X',r) \to Y \wedge Y'$ defined by
\[ \hat{f}(a) = \left \{ \begin{array}{ll}
f(a) & \mbox{if } a \in S(X,r); \\
f'(a) & \mbox{if } a \in S(X',r)
\end{array}
\right .
\]
is well defined since $F(x_0)=\{y_0\}=F'(x_0)$, and is continuous and generates $F \wedge F'$.
Thus, $F \wedge F'$ is continuous.
\item Suppose $F$ and $F'$ are both $(\kappa, \lambda)$-continuity preserving. Let $A$ be a connected subset of
      $X \wedge X'$. If $A \subset X$ or if $A \subset X'$ then $(F \wedge F')(A)$ is either $F(A)$ or $F'(A)$, hence is connected.
      Otherwise, by Lemma~\ref{piecesInWedge}, each of $A \cap X$ and $A \cap X'$ is connected. Therefore,
      $F(A \cap X)$ and $F'(A \cap X')$ are connected, and $F(A \cap X) \cap F'(A \cap X')= \{y_0\}$.
      Thus, $(F \wedge F')(A)=F(A \cap X) \cup F'(A \cap X')$ is connected. Hence
      $F \wedge F'$ is connectivity preserving.
\item Suppose $F$ and $F'$ are both $(\kappa, \lambda)$-weakly continuous. Let $a_0 \adj_{\kappa} a_1$ in $X \wedge X'$. Then
      either $\{a_0,a_1\} \subset X$ or $\{a_0,a_1\} \subset X'$. Therefore either
      $(F \wedge F')(\{a_0,a_1\})=F(\{a_0,a_1\})$ or $(F \wedge F')(\{a_0,a_1\})=F'(\{a_0,a_1\})$. In either case,
      $(F \wedge F')(a_0)$ and $(F \wedge F')(a_1\})$ are adjacent sets.
      Thus, $(F \wedge F')$ is weakly continuous.
\item Suppose $F$ and $F'$ are both $(\kappa, \lambda)$-strongly continuous. Let $a_0 \adj_{\kappa} a_1$ in $X \wedge X'$. As above
      for our argument concerning weak continuity, either
      $(F \wedge F')(\{a_0,a_1\})=F(\{a_0,a_1\})$ or $(F \wedge F')(\{a_0,a_1\})=F'(\{a_0,a_1\})$. In either case, one sees from
      Definition~\ref{Tsaur-def} that $F \wedge F'$ is strongly continuous.
\end{enumerate}
\end{proof}

\section{Further remarks}
We have studied properties of structured multivalued functions between digital images.
In section~\ref{contAndOthers}, we studied relations between continuous multivalued functions and other structured types
of multivalued functions. In section~\ref{compSection}, we studied properties of multivalued functions that are preserved by
composition. In section~\ref{retAndExt}, we studied retractions and extensions of structured multivalued functions. In
section~\ref{wedgeSec}, we studied properties of multivalued functions that are preserved by the wedge operation.

\end{document}